\documentclass[3p]{elsarticle}
\usepackage[T1]{fontenc}
\usepackage{lmodern}
\usepackage{amsmath}
\usepackage{amssymb}
\usepackage{graphicx}
\usepackage{float}      
\usepackage{booktabs}
\usepackage[font=small,labelfont=bf]{caption}
\usepackage[english]{babel}
\usepackage{xcolor}
\usepackage{listings}   
\usepackage[colorlinks=true]{hyperref}
\hypersetup{pdftitle={Fractional-order differential geometry of Gielis curves: chart regularization, Caputo curvature, and the fate of singularities},
            pdfauthor={Umut Selvi, Derya Sağlam}}
\AtBeginDocument{\hypersetup{linkcolor=blue!50!black,citecolor=blue!50!black,urlcolor=blue!50!black}}
\biboptions{sort&compress}

\makeatletter
\def\ps@pprintTitle{%
  \let\@oddhead\@empty
  \let\@evenhead\@empty
  \def\@oddfoot{\reset@font\hfil\thepage\hfil}%
  \let\@evenfoot\@oddfoot}
\makeatother

\newtheorem{theorem}{Theorem}
\newtheorem{proposition}{Proposition}
\newtheorem{corollary}{Corollary}
\newtheorem{lemma}{Lemma}

\newdefinition{example}{Example}
\newdefinition{definition}{Definition}
\newdefinition{remark}{Remark}
\newproof{proof}{Proof}

\newcommand{\R}{\mathbb{R}}
\newcommand{\G}{\mathcal{G}}
\newcommand{\bt}{\mathbf{t}}
\newcommand{\bn}{\mathbf{n}}
\newcommand{\bb}{\mathbf{b}}

\begin{document}

\title{Fractional-order differential geometry of Gielis curves: chart regularization, Caputo curvature, and the fate of singularities}

\author[a]{Umut Selvi}
\author[a]{Derya Sağlam}
\address[a]{Department of Mathematics, Ankara Hacı Bayram Veli University, Ankara 06900, Türkiye}

\begin{abstract}
Gielis curves describe many natural shapes, and some of them have corners or cusps. We apply two recent ideas from the differential geometry of curves to Gielis curves: ``regularizing'' changes of metric and fractional curvatures based on the Caputo derivative. First, we show that the smoothness of a Gielis curve is controlled by the exponents $n_2$ and $n_3$, not by $n_1$. A change of coordinates, the Gielis chart, maps every Gielis curve that closes after one turn onto the unit circle. However, the corners and cusps of the curve reappear as a loss of regularity of the metric; they are transported, not removed. Second, for the fractional curvature defined by Caputo scaling, we distinguish two conventions that differ in the choice of parameter and derive their scaling laws under homotheties. We prove that the memory of the Caputo operator breaks the rotational symmetry: the fractional curvature takes different values at corresponding points of congruent arcs. For fractional order $\alpha<1$, the curves of constant fractional curvature are spirals, and the clothoid is a special case. We also introduce a fractional total turning for curves with corners, give closed forms for regular polygons, and show that fractional general and slant helices coincide with the classical ones. The Python library used in the computations is listed in the appendix.
\end{abstract}

\begin{keyword}
Gielis curves \sep superformula \sep Caputo derivative \sep fractional curvature \sep fractional Frenet frame \sep total turning \sep slant helices
\MSC[2020] 53A04 \sep 26A33 \sep 53A55
\end{keyword}

\maketitle
\suppressfloats[t]

\section{Introduction}\label{sec:intro}
The superformula of Gielis~\cite{Gielis2003} extends the superellipses of Lam\'e~\cite{Lame1818} to shapes with any rotational symmetry. With a few parameters, the resulting Gielis curves describe a wide range of natural and abstract shapes, such as leaves, flowers, fruits, starfish and regular polygons~\cite{Gielis2003,Gielis2012,Gielis2017,ShiRatkowskyGielis2020,Li2022}. They are used to fit biological shapes to data~\cite{Shi2018,ShiRatkowskyGielis2020,ShiBiogeom2022}, to solve boundary value problems in domains bounded by Gielis curves~\cite{Caratelli2013}, and to model solids and design antennas~\cite{Fougerolle2005,Fougerolle2006,Simeoni2011}.

The geometry of Gielis curves and surfaces has been studied as well. The names Gielis curves and Gielis surfaces go back to the geometric work of Gielis, Haesen and Verstraelen~\cite{GielisHaesenVerstraelen2005}. Koiso and Palmer~\cite{KoisoPalmer2008} constructed surfaces of constant anisotropic mean curvature for energies whose Wulff shape is given by the Gielis formula. Matsuura~\cite{Matsuura2015} analyzed the curvature of Gielis curves at their corners and at the midpoints of their sides, and compared their limit curves with regular polygons. Anisotropic curvature and Gielis transformations of spirals are studied in~\cite{GielisSpirals2020,Spichal2025}. Recently, a superelliptic framework for the differential geometry of Gielis transformations was proposed~\cite{Ozdemir2026}, and it was extended with quaternions and rotations~\cite{ParlakOzdemir2026,OzdemirGielis2026}.

Fractional derivatives are widely used to model processes with memory. Among them, the Caputo derivative is a common choice, since it annihilates constants~\cite{Caputo1967,Podlubny1999,Kilbas2006,Diethelm2010}. Fractional derivatives have also been used to extend the differential geometry of curves and surfaces~\cite{YajimaYamasaki2012,Lazopoulos2016}. For curves, several approaches exist. One uses a simplified Caputo derivative of a composite function, which rescales the classical Frenet apparatus~\cite{Yajima2018,Aydin2021,HasYilmaz2023}. Another uses the conformable derivative~\cite{Gozutok2019,Has2022}. A third is based on the exact Caputo derivative and leads to a different notion of fractional curvature~\cite{RubioLopez2023,RubioLopez2024,Ogrenmis2025,RubioLopez2026}.

Gielis curves are a natural test case for these ideas. Some of them have corners or cusps, and the superelliptic framework of~\cite{Ozdemir2026} treats all Gielis curves in a uniform way by means of a modified inner product and a modified derivative. This raises the question of what a change of metric can do at a corner or a cusp. Gielis curves also have rotational symmetry, which makes it easy to see how a fractional curvature depends on the base point of the Caputo operator.

In this paper, we first show that the smoothness of a Gielis curve is governed by the exponents $n_2$ and $n_3$, and not by $n_1$ (Lemma~\ref{lem:regular} and Corollary~\ref{cor:geom}). We then study the Gielis chart, which maps every Gielis curve that closes after one turn onto the unit circle. However, the corners and cusps of the curve reappear as a loss of regularity of the metric (Proposition~\ref{prop:chart} and Corollary~\ref{cor:transport}). We also show that the star derivative of the superelliptic framework is not tangent to the transformed curve (Remark~\ref{rem:superelliptic}).

For the fractional curvature defined by Caputo scaling, we distinguish two conventions, which we call E and F. They have different scaling laws under homotheties (Theorem~\ref{thm:scaling} and Corollary~\ref{cor:param}). We prove that the memory of the Caputo operator breaks the rotational symmetry (Theorem~\ref{thm:memory}) and that the curves of constant fractional curvature include the clothoid (Theorem~\ref{thm:cesaro}). We also define a fractional total turning for curves with corners, give conditions for its finiteness and derive closed forms for regular polygons (Definition~\ref{def:theta} and Propositions~\ref{prop:theta} and~\ref{prop:polygon}). Finally, we show that fractional general helices and slant helices coincide with the classical ones~\cite{Struik1988,Izumiya2004,Kula2010} (Proposition~\ref{prop:helix} and Remark~\ref{rem:HY34}). All numerical values in the paper are verified numerically (Section~\ref{sec:numerics}), and the Python library used in the computations is listed in \ref{app:code}.

The paper is organized as follows. Section~\ref{sec:prelim} collects basic facts on Gielis curves and on Caputo scaling. Section~\ref{sec:chart} studies the Gielis chart, and Section~\ref{sec:frac} the fractional curvature of Gielis curves. Section~\ref{sec:turning} treats the fractional total turning of cornered curves, and Section~\ref{sec:helix} fractional helices over Gielis bases. Section~\ref{sec:numerics} describes the computations, and Section~\ref{sec:discussion} discusses the results and open problems.

\section{Preliminaries}\label{sec:prelim}
\subsection{Gielis curves: symmetry, closure and regularity}\label{sec:gielis}
Gielis curves are given in polar form $\gamma(\varphi)=r(\varphi)(\cos\varphi,\sin\varphi)$ by the superformula of Gielis \cite{Gielis2003},
\begin{equation}\label{eq:gielis}
r(\varphi)=\Bigl(\bigl|\tfrac{1}{a}\cos\tfrac{m\varphi}{4}\bigr|^{n_2}+\bigl|\tfrac{1}{b}\sin\tfrac{m\varphi}{4}\bigr|^{n_3}\Bigr)^{-1/n_1},
\qquad a,b,n_2,n_3>0,\ n_1\neq0.
\end{equation}
Throughout, $m$ is a positive integer and we use the exponent notation $(m;n_1,n_2,n_3)$ with $a=b=1$ unless stated otherwise.

\begin{lemma}\label{lem:periodic}
(a) $r$ is even and $4\pi/m$-periodic.
(b) If $a=b$ and $n_2=n_3$, then $r$ is $2\pi/m$-periodic.
(c) Consequently $r$ is $2\pi$-periodic, i.e.\ $\gamma$ closes after one turn, whenever $m$ is even, or $a=b$ and $n_2=n_3$. The curve is then invariant under the reflection in the $x$-axis and under the rotation by $2\pi/K$, where $K=m/2$ in general and $K=m$ in case~(b); it consists of $K$ congruent arcs. If $m$ is odd and (b) fails, then in general $r(\varphi+2\pi)\neq r(\varphi)$ and $\gamma$ closes only after $\varphi$ has run over $[0,4\pi]$.
\end{lemma}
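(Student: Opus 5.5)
The plan is a direct computation in the variable $u=m\varphi/4$. Write $r(\varphi)=F(u)^{-1/n_1}$ with
\[
F(u)=\bigl|\tfrac1a\cos u\bigr|^{n_2}+\bigl|\tfrac1b\sin u\bigr|^{n_3}.
\]
Every symmetry of $r$ then comes from a symmetry of $F$. Note that $F>0$ everywhere, because $\cos u$ and $\sin u$ never vanish simultaneously. Hence $F^{-1/n_1}$ is well defined, and it is the injective image of $F$ under $t\mapsto t^{-1/n_1}$ for every $n_1\neq0$. For the equalities, symmetries of $F$ and of $r$ are therefore the same thing.

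For (a), evenness follows from $|\cos(-u)|=|\cos u|$ and $|\sin(-u)|=|\sin u|$. For periodicity, $|\cos|$ and $|\sin|$ are $\pi$-periodic, so $F$ is $\pi$-periodic in $u$. Since a shift of $u$ by $\pi$ is a shift of $\varphi$ by $4\pi/m$, $r$ is $4\pi/m$-periodic. For (b), put $a=b$ and $n_2=n_3=n$. Then
\[
F(u+\pi/2)=\bigl|\tfrac1a\sin u\bigr|^{n}+\bigl|\tfrac1a\cos u\bigr|^{n}=F(u),
\]
so $F$ is $\pi/2$-periodic, which means $r$ is $2\pi/m$-periodic.

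For (c), closure after one turn requires $2\pi$ to be an integer multiple of the period. If $m$ is even, $2\pi=(m/2)\cdot(4\pi/m)$. In case (b), $2\pi=m\cdot(2\pi/m)$. The reflection $(x,y)\mapsto(x,-y)$ sends $\gamma(\varphi)$ to $r(\varphi)(\cos(-\varphi),\sin(-\varphi))=\gamma(-\varphi)$ by evenness, so the trace is invariant. The rotation by the period $T$ sends $\gamma(\varphi)$ to $r(\varphi)(\cos(\varphi+T),\sin(\varphi+T))=\gamma(\varphi+T)$ by periodicity. With $T=4\pi/m=2\pi/(m/2)$ this gives $K=m/2$; in case (b), $T=2\pi/m$ gives $K=m$. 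The $K$ congruent arcs are the images of $\gamma|_{[0,T]}$ under these rotations, and they tile $[0,2\pi]$.

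The only step needing care is the negative claim for odd $m$ when (b) fails, since it is stated "in general". I would first dispose of a special case. If $a=b$ but $n_2\neq n_3$, then $F$ is not $\pi/2$-periodic; this is what "(b) fails" means, with genericity in mind. For odd $m$, the shift $\varphi\mapsto\varphi+2\pi$ is $u\mapsto u+m\pi/2$, which modulo $\pi$ is $u\mapsto u+\pi/2$. That shift swaps $|\cos u|$ and $|\sin u|$. So $r(\varphi+2\pi)=r(\varphi)$ for all $\varphi$ would force
\[
\bigl|\tfrac1a\cos u\bigr|^{n_2}+\bigl|\tfrac1b\sin u\bigr|^{n_3}=\bigl|\tfrac1a\sin u\bigr|^{n_2}+\bigl|\tfrac1b\cos u\bigr|^{n_3}.
\]
Testing at $u=0$ gives $a^{-n_2}=b^{-n_3}$. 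I would then exhibit failure at a further point such as $u=\pi/4$, or by comparing the behaviour near $u=0$ where the terms scale like $u^{n_3}$ versus $u^{n_2}$; this shows the identity fails unless $n_2=n_3$. Once $n_2=n_3$, the condition $a^{-n_2}=b^{-n_3}$ gives $a=b$, contradicting the failure of (b). Finally, since $4\pi$ is a period because $m\cdot(4\pi/m)=4\pi$, the curve closes after $\varphi$ runs over $[0,4\pi]$.
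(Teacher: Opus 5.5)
Your proof of (a), (b) and of the positive part of (c) follows the paper's argument: shifts of $m\varphi/4$ by $\pi$ and by $\pi/2$, $2\pi=\frac m2\cdot\frac{4\pi}{m}=m\cdot\frac{2\pi}{m}$, and the reflection from evenness. The paper does not prove the last clause. It hedges it with ``in general'' and only illustrates it by $(5;7,7,4)$.

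Your final paragraph tries to prove that clause without the hedge. This fails, because the unhedged claim is false. Your reduction is correct: for odd $m$, $r(\varphi+2\pi)\equiv r(\varphi)$ iff $F(u+\pi/2)\equiv F(u)$, and $u=0$ gives $a^{-n_2}=b^{-n_3}$. Neither of your next tests works, however. The point $u=\pi/4$ is fixed by the swap $|\cos u|\leftrightarrow|\sin u|$, so it gives only a tautology. Your comparison near $u=0$ ignores that $|\cos u|^{n}=1-\tfrac n2u^2+O(u^4)$ contributes at order $u^2$. That term can balance $|u|^{\min(n_2,n_3)}$ when the minimum is $2$. Indeed, for $a=b=1$, $n_2=2$, $n_3=4$,
\[
F(u)=\cos^2u+\sin^4u=1-\sin^2u\cos^2u=1-\tfrac14\sin^2 2u
\]
is $\pi/2$-periodic. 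Hence $(5;n_1,2,4)$ closes after one turn although (b) fails. This example also refutes your preliminary claim that $a=b$ with $n_2\neq n_3$ excludes $\pi/2$-periodicity.

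To repair this, you have two options. The first is to prove only what is stated: exhibit one parameter set, such as $(5;7,7,4)$, with $r(\varphi+2\pi)\not\equiv r(\varphi)$. The second is to determine the exact exceptional set, as follows.

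With $a^{-n_2}=b^{-n_3}$ and $t=\cos^2u\in[0,1]$, the condition becomes
\[
t^{n_2/2}-t^{n_3/2}=(1-t)^{n_2/2}-(1-t)^{n_3/2}.
\]
Take $n_2\neq n_3$ and let $t\to0^+$. The left side is $\pm t^{\min(n_2,n_3)/2}(1+o(1))$, and the right side is $\frac{n_3-n_2}{2}\,t+O(t^2)$. Matching these forces $\min(n_2,n_3)=2$ and $|n_2-n_3|=2$. Conversely, $\{n_2,n_3\}=\{2,4\}$ gives $\pm t(1-t)$ on both sides.

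So for odd $m$ the curve closes after one turn exactly in case (b), or when $\{n_2,n_3\}=\{2,4\}$ and $a^{-n_2}=b^{-n_3}$.
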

\begin{proof}
(a) $\varphi\mapsto-\varphi$ leaves $|\cos\frac{m\varphi}{4}|$ and $|\sin\frac{m\varphi}{4}|$ unchanged, and $\varphi\mapsto\varphi+4\pi/m$ adds $\pi$ to $m\varphi/4$, which only changes signs. (b) $\varphi\mapsto\varphi+2\pi/m$ adds $\pi/2$ to $m\varphi/4$ and interchanges $|\cos|$ and $|\sin|$. (c) $2\pi=\frac m2\cdot\frac{4\pi}{m}=m\cdot\frac{2\pi}{m}$; evenness of $r$ is the reflection symmetry.
\qed
\end{proof}
For instance $(6;6,4,4)$ has $K=6$ congruent arcs, whereas $(6;6,4,4)$ with $b=1.3$, or $(6;6,4,5)$, has only $K=3$; the curve $(5;7,7,4)$ does not close after one turn. The test curves of this paper are shown in Figure~\ref{fig:gallery}.

\begin{figure}[H]\centering
\includegraphics[width=\textwidth]{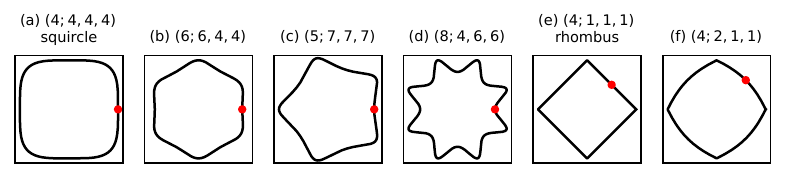}
\caption{Test curves in exponent notation $(m;n_1,n_2,n_3)$, $a=b=1$. Red dot: base point, $\varphi_0=0$, except for the cornered curves $(4;1,1,1)$ and $(4;2,1,1)$, whose point $\varphi=0$ is a vertex; there $\varphi_0=\pi/4$ (an edge midpoint).}\label{fig:gallery}
\end{figure}

\begin{lemma}\label{lem:regular}
Away from the zeros of $\sin(m\varphi/2)$ the function $r$ is real-analytic. Let $\varphi_*$ be a zero of $\sin(m\varphi/4)$ and put $n=n_3$, or a zero of $\cos(m\varphi/4)$ and put $n=n_2$. Near $\varphi_*$, $r$ is of class $C^k$ if and only if $|\varphi-\varphi_*|^n$ is; that is, $r$ is $C^\infty$ (indeed real-analytic) near $\varphi_*$ if $n$ is an even integer, and otherwise $r\in C^k$ near $\varphi_*$ exactly for the integers $k<n$. In particular, the exponent $n_1$ has no influence on the regularity of $r$.
\end{lemma}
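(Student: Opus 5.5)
We write $r=F^{-1/n_1}$ with
\[
F(\varphi)=\bigl|\tfrac1a\cos\tfrac{m\varphi}{4}\bigr|^{n_2}+\bigl|\tfrac1b\sin\tfrac{m\varphi}{4}\bigr|^{n_3}.
\]
The plan is to reduce everything to the regularity of $F$, using that $F$ is bounded below by a positive constant. Indeed, $|\cos|$ and $|\sin|$ never vanish simultaneously, so $F$ is continuous, $4\pi/m$-periodic by Lemma~\ref{lem:periodic}, and therefore $F\ge c>0$. The map $t\mapsto t^{-1/n_1}$ is real-analytic on $(0,\infty)$ with real-analytic inverse $s\mapsto s^{-n_1}$. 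Hence, on any open set, $r$ is $C^k$ (resp.\ real-analytic) if and only if $F$ is. This already gives the final claim that $n_1$ plays no role, since it enters only through this diffeomorphism.

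\textbf{Away from the singular points.} The zeros of $\sin(m\varphi/2)=2\sin(m\varphi/4)\cos(m\varphi/4)$ are exactly the zeros of either factor. Off this set, $|\cos(m\varphi/4)|$ and $|\sin(m\varphi/4)|$ are positive and real-analytic, being $\pm$ an analytic function on each interval. Raising a positive analytic function to a real power keeps it analytic, so $F$ is analytic there, and hence so is $r$.

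\textbf{Near a zero $\varphi_*$ of $\sin(m\varphi/4)$.} Here $n=n_3$. The cosine term is positive and analytic near $\varphi_*$, so $F\in C^k$ if and only if $g(\varphi)=|\sin(m\varphi/4)|^{n}$ is. We write
\[
\sin\tfrac{m\varphi}{4}=(\varphi-\varphi_*)\,h(\varphi)
\]
with $h$ analytic and $h(\varphi_*)=\pm m/4\neq0$. Then $g=|\varphi-\varphi_*|^{n}\,|h|^{n}$ with $|h|^n$ analytic and nonvanishing. Multiplying by $|h|^{-n}$ shows that $g\in C^k$ if and only if $|\varphi-\varphi_*|^n\in C^k$. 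The case of a zero of $\cos(m\varphi/4)$ is identical with $n=n_2$, the sine term now being the positive analytic one.

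\textbf{Regularity of $u(x)=|x|^n$ at $0$.} If $n$ is an even integer, then $u=x^n$ is a polynomial. Otherwise, on $x\neq0$ the $j$-th derivative is $c_j\,\mathrm{sgn}(x)^{n+j}|x|^{n-j}$ with
\[
c_j=n(n-1)\cdots(n-j+1).
\]
For $j<n$, $c_j\neq0$ and the exponent $n-j$ is positive, so this derivative tends to $0$ at the origin. By induction, using the mean value theorem to identify the derivative at $0$ as a limit, $u\in C^k$ for every integer $k<n$.

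For the converse, take the smallest integer $k\ge n$. Two cases arise.
\begin{itemize}
\item If $n$ is not an integer, then $u^{(k-1)}\sim c|x|^{n-k+1}$ with $0<n-k+1<1$ and $c\neq0$. This is not differentiable at $0$, so $u\notin C^k$.
\item If $n=k$ is an odd integer, then $u^{(k-1)}=c\,|x|$ with $c=k!\neq0$, up to sign. This is not differentiable at $0$ either.
\end{itemize}

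\textbf{Main obstacle.} The main care is the bookkeeping of the sign factor and of the constants $c_j$. The key point is that $c_j$ vanishes for some $j\le k-1$ only when $n$ is an integer smaller than $k$, which our choice of $k$ excludes. The real-analytic claim for even $n$ follows because $|\varphi-\varphi_*|^n|h|^n=\bigl((\varphi-\varphi_*)h\bigr)^n=\sin^{n}(m\varphi/4)$, which is analytic.
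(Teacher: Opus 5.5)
Your proof is correct and follows the paper's argument essentially step by step. You reduce to the bracket via the analytic diffeomorphism $y\mapsto y^{-1/n_1}$ of $(0,\infty)$, factor the simple zero as $(\varphi-\varphi_*)h$ with $h(\varphi_*)=\pm m/4\neq0$, and analyse $|x|^n$ at $0$; your non-differentiability of the $(k-1)$-st derivative is equivalent to the paper's unbounded or jumping $k$-th derivative.

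One slip: for $x\neq0$ the $j$-th derivative of $|x|^n$ is $c_j(\operatorname{sgn}x)^j|x|^{n-j}$, not $c_j\operatorname{sgn}(x)^{n+j}|x|^{n-j}$. Your version is undefined for non-integer $n$, and for odd $n=k$ it would give the smooth function $k!\,x$ for $u^{(k-1)}$. Your case analysis actually uses the correct $u^{(k-1)}=k!\,|x|$, so nothing breaks.
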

\begin{proof}
Let $A$ denote the bracket in \eqref{eq:gielis}. The two terms of $A$ do not vanish simultaneously, so $A>0$, and $y\mapsto y^{-1/n_1}$ is an analytic diffeomorphism of $(0,\infty)$; hence $r$ and $A$ have the same regularity everywhere, whatever $n_1$. Away from the zeros of $\cos\frac{m\varphi}{4}\sin\frac{m\varphi}{4}=\frac12\sin\frac{m\varphi}{2}$ both terms of $A$ are analytic. Near a zero $\varphi_*$ of $\sin\frac{m\varphi}{4}$ the first term is analytic, and $\sin\frac{m\varphi}{4}=(\varphi-\varphi_*)g(\varphi)$ with $g$ analytic and $g(\varphi_*)=\pm m/4\neq0$, so $|\frac1b\sin\frac{m\varphi}{4}|^{n_3}=|\varphi-\varphi_*|^{n_3}h(\varphi)$ with $h$ analytic and positive; multiplication by $h$ preserves the class $C^k$. The same argument applies at the zeros of $\cos\frac{m\varphi}{4}$. Finally, for $x\neq0$ the $k$-th derivative of $|x|^n$ is $n(n-1)\cdots(n-k+1)|x|^{n-k}(\operatorname{sgn}x)^k$. If $n$ is an even integer, $|x|^n=x^n$ is analytic, and so are $A$ and $r$. Otherwise, for $k<n$ this derivative extends continuously to $x=0$ (with value $0$), whereas for the smallest integer $k\ge n$ it is unbounded near $0$ ($n\notin\mathbb Z$) or has a jump at $0$ ($n$ odd); hence $|x|^n\in C^k$ exactly for $k<n$.
\qed
\end{proof}

For $r\in C^2$ we use
\begin{equation}\label{eq:kappa}
\nu=\sqrt{r^2+r'^2},\qquad
\kappa=\frac{r^2+2r'^2-r\,r''}{(r^2+r'^2)^{3/2}},\qquad
u(\varphi)=\int_{\varphi_0}^{\varphi}\nu\,d\varphi',
\end{equation}
where $\kappa$ is the signed curvature (counterclockwise orientation), $u$ is the Euclidean arc length measured from the base point $\varphi_0$, and $\vartheta$ denotes the tangent angle, $d\vartheta/du=\kappa$.

\begin{corollary}\label{cor:geom}
Near a point $\varphi_*$ as in Lemma~\ref{lem:regular}, with governing exponent $n$:
if $n\ge2$, then $r\in C^2$ and $\kappa$ is continuous;
if $1<n<2$, the curve is $C^1$ but $\kappa$ is unbounded;
if $n=1$, $r'$ has a jump and $\gamma$ has a corner;
if $0<n<1$, $|r'|\to\infty$ and $\gamma$ has a cusp (the unit tangent tends to the radial direction, with opposite orientations on the two sides).
\end{corollary}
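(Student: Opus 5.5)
The plan is to work in the local normal form from the proof of Lemma~\ref{lem:regular}. Near $\varphi_*$ put $x=\varphi-\varphi_*$. Then $A(\varphi)=c(\varphi)+|x|^n h(\varphi)$, where $c$ and $h$ are analytic, $h>0$, and $c(\varphi_*)>0$, because the other term of $A$ does not vanish at $\varphi_*$. We have $r=A^{-1/n_1}$, so $r(\varphi_*)>0$ and
\[
r'=-\tfrac{1}{n_1}A^{-1/n_1-1}A',\qquad r''=-\tfrac1{n_1}A^{-1/n_1-1}A''+(\text{bounded})\cdot A'^2 .
\]
Hence the behaviour of $r'$ and $r''$ is that of $A'$ and $A''$, up to the nonzero factor $-\frac1{n_1}A^{-1/n_1-1}$. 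The derivatives of $A$ are governed by
\[
\tfrac{d}{dx}|x|^n=n|x|^{n-1}\operatorname{sgn}x,\qquad \tfrac{d^2}{dx^2}|x|^n=n(n-1)|x|^{n-2}\quad(x\neq0),
\]
plus analytic terms. The case analysis then goes as follows.

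\emph{Case $n\ge2$.} Lemma~\ref{lem:regular} gives $r\in C^2$. Since $r>0$, $\nu=\sqrt{r^2+r'^2}>0$, so $\kappa$ in \eqref{eq:kappa} is a continuous expression in $r,r',r''$ with nonvanishing denominator.

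\emph{Case $1<n<2$.} Here $r\in C^1$ and $\nu>0$, so $\gamma'=r'(\cos\varphi,\sin\varphi)+r(-\sin\varphi,\cos\varphi)$ is continuous and nonzero, i.e.\ the curve is regular $C^1$. For the curvature, $r'$ is bounded, $r\to r(\varphi_*)>0$, and $r''\sim -\frac{1}{n_1}A^{-1/n_1-1}h(\varphi_*)\,n(n-1)|x|^{n-2}$ is unbounded since $n(n-1)\neq0$ and $n-2<0$. The numerator $r^2+2r'^2-rr''$ is therefore unbounded while the denominator stays bounded and positive, so $|\kappa|\to\infty$.

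\emph{Case $n=1$.} We get $A'(\varphi_*\pm)=c'(\varphi_*)\pm h(\varphi_*)$, so $r'$ has one-sided limits differing by $\frac{2}{|n_1|}A^{-1/n_1-1}h\neq0$. The one-sided tangent vectors $r'_\pm e_r+r e_\varphi$ are then two nonzero vectors with the same $e_\varphi$-component $r>0$ but different $e_r$-components. They are not parallel, which gives a corner with turning angle strictly between $0$ and $\pi$.

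\emph{Case $0<n<1$.} Now $|A'|\sim n h|x|^{n-1}\to\infty$, so $|r'|\to\infty$ with $r'$ of sign $\mp\operatorname{sgn}(n_1)\operatorname{sgn}x$. The unit tangent is
\[
\frac{r' e_r+r e_\varphi}{\nu}\to\operatorname{sgn}(r')\,e_r(\varphi_*),
\]
since $r/\nu\to0$. The sign of $r'$ flips across $\varphi_*$, so the limiting directions are $\pm e_r$: radial and oppositely oriented, which is a cusp.

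The main obstacle is the $n=1$ and $n<1$ cases, where "corner" and "cusp" must be read geometrically, through the one-sided limits of the unit tangent, rather than through $r$ alone. I will also check that $\gamma$ itself remains continuous there, which holds since $r$ is continuous. Everything else reduces to the local form already established in Lemma~\ref{lem:regular}.
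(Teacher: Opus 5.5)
Your proposal is correct and follows essentially the same route as the paper: the local form $A=A_0+|x|^n h$ from the proof of Lemma~\ref{lem:regular}, the chain-rule expressions for $r'$ and $r''$, and a case analysis driven by the behaviour of $|x|^{n-1}$ and $|x|^{n-2}$, with the unit tangent $(r'\mathbf e_r+r\mathbf e_\varphi)/\nu$ deciding between corner and cusp. The paper also uses $A_0'(\varphi_*)=0$, which your argument rightly does without: a bounded $c'$ is negligible against $|x|^{n-1}$ for $n<1$, and it cancels in the jump for $n=1$. One small correction: in the cusp case the sign of $r'$ is $-\operatorname{sgn}(n_1)\operatorname{sgn}x$, without the extra $\mp$.
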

\begin{proof}
Throughout the proof $x=\varphi-\varphi_*$, and the Landau symbols refer to the limit $x\to0$: $O(1)$ denotes a quantity that remains bounded and $o(1)$ a quantity that tends to $0$; $f=O(|x|^q)$ means $|f|\le M|x|^q$ for some constant $M$ and all small $x\neq0$. As in the proof of Lemma~\ref{lem:regular}, $A=A_0(\varphi)+|x|^nh(\varphi)$ with $A_0,h$ analytic near $\varphi_*$, $h>0$ and $A_0'(\varphi_*)=0$. Since $r=A^{-1/n_1}$, we have for $x\neq0$
\[
r'=-\tfrac{1}{n_1}A^{-1/n_1-1}A',\qquad
r''=-\tfrac{1}{n_1}A^{-1/n_1-1}A''+\tfrac{1}{n_1}\bigl(\tfrac{1}{n_1}+1\bigr)A^{-1/n_1-2}(A')^2 .
\]

If $n\ge2$, Lemma~\ref{lem:regular} gives $r\in C^2$, so $\kappa$ in \eqref{eq:kappa} is continuous.

Now let $1<n<2$. By Lemma~\ref{lem:regular}, $r\in C^1$; in particular $r$ and $r'$ are bounded, and $A'=A_0'+n\operatorname{sgn}(x)|x|^{n-1}h+|x|^nh'=O(1)$. Differentiating once more,
\[
A''=n(n-1)h(\varphi_*)|x|^{n-2}+O(1),
\]
where $O(1)$ collects $A_0''$, $2n\operatorname{sgn}(x)|x|^{n-1}h'$, $|x|^nh''$ and $n(n-1)|x|^{n-2}\bigl(h(\varphi)-h(\varphi_*)\bigr)=O(|x|^{n-1})$, all bounded because $n>1$. The second term in $r''$ is $O(1)$ as well, since $A'=O(1)$ and $A$ is bounded away from $0$. Hence
\[
r''=-\tfrac{1}{n_1}A^{-1/n_1-1}\,n(n-1)h(\varphi_*)\,|x|^{n-2}+O(1).
\]
The first term tends to $\pm\infty$ because $n-2<0$ and $n(n-1)h(\varphi_*)\neq0$; adding a bounded quantity cannot prevent this, so $|r''|\to\infty$. As $r$ and $r'$ are bounded and $r^2+r'^2\ge r^2>0$, $\kappa$ in \eqref{eq:kappa} is unbounded.

If $n=1$, then $A'=A_0'+\operatorname{sgn}(x)h+|x|h'$, and $A_0'(\varphi_*)=0$ gives the one-sided limits $A'(\varphi_*^\pm)=\pm h(\varphi_*)\neq0$. Thus $r'$ has a jump at $\varphi_*$, and so has the direction of $\gamma'=r'\,\mathbf e_r+r\,\mathbf e_\varphi$ (polar frame $\mathbf e_r,\mathbf e_\varphi$): $\gamma$ has a corner.

Finally, let $0<n<1$. We write
\[
A'=n\operatorname{sgn}(x)h(\varphi_*)|x|^{n-1}\bigl(1+o(1)\bigr).
\]
Indeed, the remaining terms $A_0'=O(|x|)$ (because $A_0'(\varphi_*)=0$), $n\operatorname{sgn}(x)|x|^{n-1}\bigl(h(\varphi)-h(\varphi_*)\bigr)=O(|x|^n)$ and $|x|^nh'=O(|x|^n)$ are, relative to $|x|^{n-1}$, of order $O(|x|^{2-n})$, $O(|x|)$ and $O(|x|)$; since $2-n>1$, the relative error is $O(|x|)=o(1)$. Consequently
\[
r'=-\tfrac{1}{n_1}A^{-1/n_1-1}\,n\operatorname{sgn}(x)h(\varphi_*)|x|^{n-1}\bigl(1+o(1)\bigr).
\]
For $|x|$ small the factor $1+o(1)$ is positive, so $\operatorname{sgn}(r')=-\operatorname{sgn}(n_1)\operatorname{sgn}(x)$, and $|r'|\to\infty$ because $n-1<0$. (A bounded error term $O(1)$ in place of $o(1)$ would not suffice here: it could reverse the sign of the factor $1+O(1)$.) Therefore
\[
\frac{\gamma'}{|\gamma'|}=\frac{r'\,\mathbf e_r+r\,\mathbf e_\varphi}{\sqrt{r'^2+r^2}}\longrightarrow\operatorname{sgn}(r')\,\mathbf e_r(\varphi_*)=\mp\operatorname{sgn}(n_1)\,\mathbf e_r(\varphi_*)\qquad\text{as }x\to0^\pm ,
\]
so the unit tangent reverses its direction at $\varphi_*$: $\gamma$ has a cusp.
\qed
\end{proof}

\begin{remark}\label{rem:n1}
In the frequently used diagonal case $n_1=n_2=n_3=n$, the condition $n<2$ does produce singular points; by Lemma~\ref{lem:regular} this is due to $n_2$ and $n_3$, not to $n_1$. For example, $(4;1,4,4)$ is real-analytic although $n_1=1$, whereas $(4;4,1,1)$ has corners although $n_1=4$ (both are verified numerically; see Section~\ref{sec:numerics}). This corrects the heuristic, stated for instance in \cite[Abstract and Section~1]{Ozdemir2026}, that small values of $n_1$ ($n_1<2$) create corner-like singularities. All smooth test curves of this paper have $n_2,n_3\in\{4,6,7\}$, hence $r\in C^6$.
\end{remark}

Figure~\ref{fig:regularity} illustrates Lemma~\ref{lem:regular}, Corollary~\ref{cor:geom} and Remark~\ref{rem:n1}.

\begin{figure}[H]\centering
\includegraphics[width=\textwidth]{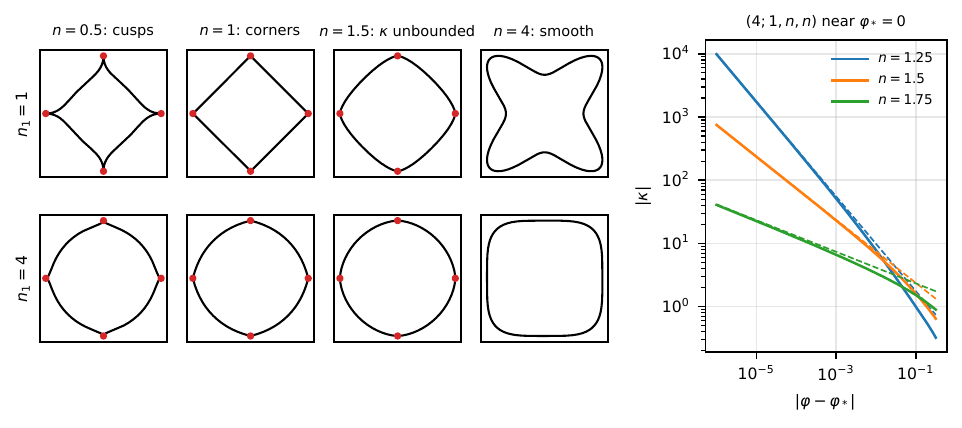}
\caption{Regularity is governed by $n_2=n_3=n$, not by $n_1$. Left: the curves $(4;n_1,n,n)$ for $n_1=1$ (top) and $n_1=4$ (bottom); red dots (drawn for $n<2$): the points $\varphi=k\pi/2$, where the governing exponent is $n$. Each column shows one type of point, in accordance with Corollary~\ref{cor:geom}: cusps ($n<1$), corners ($n=1$), unbounded curvature ($1<n<2$), smooth points ($n\ge2$). Right: $|\kappa|$ near $\varphi_*=0$ for $(4;1,n,n)$ against $|\varphi-\varphi_*|$; dashed: $n(n-1)|\varphi-\varphi_*|^{n-2}$, the leading term of $\kappa$ for $n_1=1$.}\label{fig:regularity}
\end{figure}

\subsection{Caputo scaling: two conventions}\label{sec:caputo}
The Caputo derivative \cite{Caputo1967,Diethelm2010} of order $0<\alpha<1$ is $({}^{C}D^\alpha_s f)(s)=\frac{1}{\Gamma(1-\alpha)}\int_0^s (s-v)^{-\alpha}f'(v)\,dv$; it annihilates constants, and ${}^{C}D^1_s=d/ds$. Put
\[
\lambda_\alpha(s)=\frac{\alpha\,s^{1-\alpha}}{\Gamma(2-\alpha)},\qquad s\ge0 .
\]
Let $x$ be a regular curve with Euclidean arc length $u$ (from the base point) and Frenet frame $\{\bt,\bn,\bb\}$ (for space curves we assume $C^3$ and $\kappa>0$; for plane curves $C^2$ suffices), and let $s=s(u)$ be an increasing $C^1$ parameter with $s(0)=0$. Under the simplification of the Caputo derivative of a composite function introduced by Yajima et al.\ \cite{Yajima2018}, the fractional derivative of $x(s)$ takes the form \cite[eqs.~(2.4), (2.11)]{HasYilmaz2023}
\begin{equation}\label{eq:yajima}
D^\alpha_s x:=\lambda_\alpha(s)\,\frac{dx}{ds}.
\end{equation}
Since $D^\alpha_s x$ is parallel to $dx/ds$, the unit fractional tangent coincides with $\bt$; this is why the Frenet frame does not change \cite[Conclusion~2.8]{HasYilmaz2023}. (The exact Caputo derivative is in general not tangent to the curve; see Section~\ref{sec:discussion}.)

\begin{definition}\label{def:fraccurv}
The fractional curvature and torsion along the parameter $s$ are
\[
\kappa_\alpha[s]=\frac{\bigl\langle d\bt/ds,\bn\bigr\rangle}{|D^\alpha_s x|},\qquad
\tau_\alpha[s]=\frac{\bigl\langle d\bn/ds,\bb\bigr\rangle}{|D^\alpha_s x|}
\]
(for plane curves $\bn$ is $\bt$ rotated by $+\pi/2$ and $\kappa_\alpha$ is signed).
\end{definition}

\begin{lemma}\label{lem:param}
For the quantities of Definition~\ref{def:fraccurv}, $\kappa_\alpha[s]=\kappa(u)/\lambda_\alpha(s(u))$ and $\tau_\alpha[s]=\tau(u)/\lambda_\alpha(s(u))$.
\end{lemma}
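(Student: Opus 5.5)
The plan is a direct chain-rule computation, followed by substitution into Definition~\ref{def:fraccurv}. Since $s=s(u)$ is increasing and $C^1$ with $s(0)=0$, I write $s'(u)=ds/du\ge0$ and regard the curve and its Frenet frame as functions of $u$. Then $dx/ds=\bt\,(du/ds)$, and \eqref{eq:yajima} gives
\[
|D^\alpha_s x|=\lambda_\alpha(s)\,\frac{du}{ds},
\]
because $\lambda_\alpha(s)\ge0$ for $s\ge0$ and $|\bt|=1$.

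Next I differentiate the frame with respect to $s$ by the chain rule and use the classical Frenet equations in arc length, $d\bt/du=\kappa\bn$ and $d\bn/du=-\kappa\bt+\tau\bb$. This gives
\[
\Bigl\langle \frac{d\bt}{ds},\bn\Bigr\rangle=\kappa(u)\,\frac{du}{ds},\qquad
\Bigl\langle \frac{d\bn}{ds},\bb\Bigr\rangle=\tau(u)\,\frac{du}{ds}.
\]
For plane curves, $\bn$ is $\bt$ rotated by $+\pi/2$ and $\kappa$ is the signed curvature, so the first identity still holds. Dividing these by $|D^\alpha_s x|$, the factor $du/ds$ cancels, leaving $\kappa(u)/\lambda_\alpha(s(u))$ and $\tau(u)/\lambda_\alpha(s(u))$.

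The only delicate point is where the cancellation is legitimate. It requires $du/ds$ to be finite and nonzero, that is $s'(u)>0$, and also $\lambda_\alpha(s)>0$, which fails at the base point $s=0$. I would therefore state the identity at points with $s>0$ and $s'(u)>0$. At such points the inverse function $u(s)$ is $C^1$ and all the quantities above are defined. At $s=0$ the definition itself degenerates, since $|D^\alpha_s x|=0$, so there is nothing to prove there. If $s'(u)=0$ at isolated points, I would either exclude them or interpret the formula by continuity, noting that the right-hand side $\kappa/\lambda_\alpha$ does not involve $s'$ at all.

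With that caveat the statement follows. The main point I would emphasize is that the dependence on the parametrization enters only through $\lambda_\alpha(s(u))$; it does not enter through the speed $ds/du$.
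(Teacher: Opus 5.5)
Your proposal is correct and follows the paper's own proof: the chain-rule identities $d\bt/ds=\kappa\bn\,du/ds$, $d\bn/ds=(-\kappa\bt+\tau\bb)\,du/ds$ and $|D^\alpha_s x|=\lambda_\alpha(s)\,du/ds$, after which the factor $du/ds$ cancels. Your restriction to points with $s>0$ and $s'(u)>0$ is a sensible precision that the paper leaves implicit; note that it does exclude the base point in convention F, where $ds_F/du=C_Fu^{p_F}$ vanishes at $u=0$ for $\alpha<1$.
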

\begin{proof}
$d\bt/ds=\kappa\bn\,du/ds$, $d\bn/ds=(-\kappa\bt+\tau\bb)\,du/ds$ and $|D^\alpha_s x|=\lambda_\alpha(s)\,du/ds$.
\qed
\end{proof}
Thus the fractional curvature depends on the parameter only through $\lambda_\alpha(s)$. Two choices are natural; Remark~\ref{rem:HY} explains how both arise from \cite{HasYilmaz2023}. In the first, which we call convention~E (Euclidean arc length), $s=u$, so that
\[
w_E(u):=\lambda_\alpha(u)=C_E\,u^{p_E},\qquad C_E=\frac{\alpha}{\Gamma(2-\alpha)},\quad p_E=1-\alpha .
\]
In the second, convention~F (fractional arc length), $s=s_F$ with unit fractional speed, $|D^\alpha_{s_F}x|\equiv1$, i.e.\ $ds_F/du=\lambda_\alpha(s_F)$. The solution with $s_F(0)=0$ and $s_F(u)>0$ for $u>0$ is
\begin{equation}\label{eq:sF}
s_F(u)=\Bigl(\frac{\alpha^2u}{\Gamma(2-\alpha)}\Bigr)^{1/\alpha},
\end{equation}
which is \cite[eq.~(2.8)]{HasYilmaz2023}; with this parameter \eqref{eq:yajima} is their (2.11) and Lemma~\ref{lem:param} reproduces their (2.13)--(2.14), taken from \cite{Aydin2021}. Here
\[
w_F(u):=\lambda_\alpha(s_F(u))=\frac{ds_F}{du}=C_F\,u^{p_F},\qquad C_F=\alpha^{2/\alpha-1}\,\Gamma(2-\alpha)^{-1/\alpha},\quad p_F=\frac{1-\alpha}{\alpha}.
\]
In both conventions
\begin{equation}\label{eq:scaling}
\kappa_\alpha=\frac{\kappa}{w},\qquad \tau_\alpha=\frac{\tau}{w},\qquad w(u)=C\,u^{p},
\end{equation}
with $C=1$, $p=0$ exactly when $\alpha=1$; for $\alpha<1$, $p_F-p_E=(1-\alpha)^2/\alpha>0$. Table~\ref{tab:conv} lists the constants for some values of $\alpha$, and Figure~\ref{fig:weights} shows the weights.

\begin{table}[H]\centering
\caption{Constants of the weight $w(u)=Cu^p$ in conventions E and F, and the homothety exponent $1+p$ of $\kappa_\alpha$ (Theorem~\ref{thm:scaling}).}\label{tab:conv}
\begin{tabular}{rrrrrrr}
\toprule
$\alpha$ & $C_E$ & $p_E$ & $1+p_E$ & $C_F$ & $p_F$ & $1+p_F$ \\
\midrule
0.25 & 0.2720 & 0.7500 & 1.7500 & $8.555\times10^{-5}$ & 3.0000 & 4.0000 \\
0.5 & 0.5642 & 0.5000 & 1.5000 & 0.1592 & 1.0000 & 2.0000 \\
0.75 & 0.8274 & 0.2500 & 1.2500 & 0.7058 & 0.3333 & 1.3333 \\
1.0 & 1.0000 & 0.0000 & 1.0000 & 1.0000 & 0.0000 & 1.0000 \\
\bottomrule
\end{tabular}
\end{table}

\begin{figure}[H]\centering
\includegraphics[width=\textwidth]{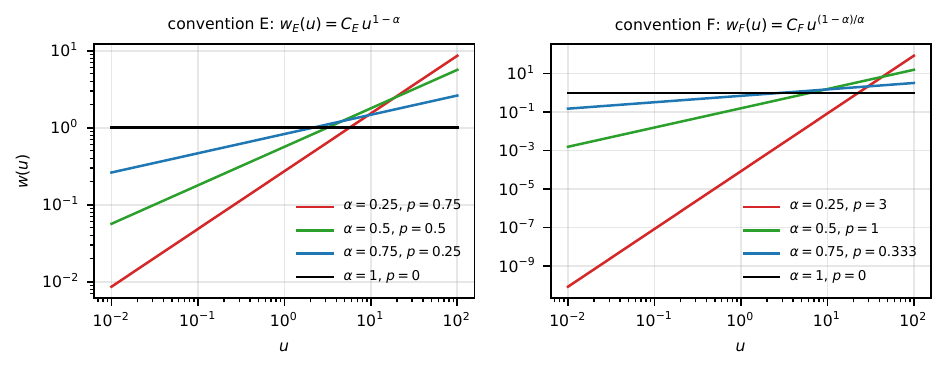}
\caption{The weights $w(u)=C\,u^p$ of the two conventions on logarithmic axes, convention E (left) and F (right), for $\alpha=0.25$, $0.5$, $0.75$ and $1$ (Table~\ref{tab:conv}). The slopes are $p_E=1-\alpha$ and $p_F=(1-\alpha)/\alpha$; for $\alpha<1$ the weight of convention F grows faster and is smaller near $u=0$.}\label{fig:weights}
\end{figure}

\begin{remark}\label{rem:HY}
In \cite{HasYilmaz2023} the parameter $s$ is introduced by (2.8), so it is $s_F$ and not the Euclidean arc length; the phrase ``where $s$ measures its arc length'' in their Theorems~2.6--2.7 has to be understood in the sense of unit fractional speed, $\|d^\alpha x/ds^\alpha\|=1$, established after their (2.11). Reading $s$ as Euclidean arc length in (2.13)--(2.14) yields convention~E instead. The two conventions differ by the factor $w_F/w_E=(C_F/C_E)\,u^{(1-\alpha)^2/\alpha}$; for $\alpha=1/2$, for example, $\kappa^E_\alpha=\kappa/(0.5642\,u^{1/2})$ whereas $\kappa^F_\alpha=2\pi\kappa/u$. We state all results for both conventions.
\end{remark}

\begin{lemma}[Fractional Frenet equations]\label{lem:frenet}
Let $P(u)=\int_0^u w(v)\,dv=C\,u^{p+1}/(p+1)$. With $'=d/dP$ the classical frame satisfies
\begin{equation}\label{eq:frenetP}
\bt'=\kappa_\alpha\bn,\qquad \bn'=-\kappa_\alpha\bt+\tau_\alpha\bb,\qquad \bb'=-\tau_\alpha\bn .
\end{equation}
In convention F, $P=s_F$ of \eqref{eq:sF} and \eqref{eq:frenetP} is \cite[eq.~(2.12)]{HasYilmaz2023}; in convention E, $P_E(u)=\alpha u^{2-\alpha}/\Gamma(3-\alpha)$.
\end{lemma}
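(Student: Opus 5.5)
The plan is to reduce \eqref{eq:frenetP} to the classical Frenet--Serret equations in Euclidean arc length by a chain rule, and then to identify $P$ in each convention.

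\textbf{Step 1: $P$ is a valid parameter.} Since $w(u)=Cu^p$ with $C>0$ and $p\ge0$, the function $P(u)=\int_0^u w\,dv$ is $C^1$ and strictly increasing on $u>0$, with $dP/du=w(u)$. The antiderivative $Cu^{p+1}/(p+1)$ is elementary because $p+1>0$. Hence on $u>0$ we can write $d/dP=w(u)^{-1}\,d/du$.

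\textbf{Step 2: transfer the classical equations.} The classical frame satisfies $d\bt/du=\kappa\bn$, $d\bn/du=-\kappa\bt+\tau\bb$ and $d\bb/du=-\tau\bn$. For plane curves only the first two hold, with $\tau=0$ and signed $\kappa$. Dividing by $w$ and using \eqref{eq:scaling}, $\kappa_\alpha=\kappa/w$ and $\tau_\alpha=\tau/w$, gives \eqref{eq:frenetP} at once. This also follows from Lemma~\ref{lem:param}.

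\textbf{Step 3: convention F.} Here $w_F=ds_F/du$ by the defining ODE, and $s_F(0)=0$. So $P(u)=\int_0^u ds_F=s_F(u)$, which agrees with \eqref{eq:sF}. As a consistency check, $C_F u^{p_F+1}/(p_F+1)=\alpha C_F u^{1/\alpha}$. Substituting $C_F$ gives $\alpha^{2/\alpha}\Gamma(2-\alpha)^{-1/\alpha}u^{1/\alpha}$, which is \eqref{eq:sF}. With $P=s_F$, \eqref{eq:frenetP} is literally \cite[eq.~(2.12)]{HasYilmaz2023}.

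\textbf{Step 4: convention E.} We have $P_E=\frac{\alpha}{\Gamma(2-\alpha)}\cdot\frac{u^{2-\alpha}}{2-\alpha}$. Since $(2-\alpha)\Gamma(2-\alpha)=\Gamma(3-\alpha)$, this equals $\alpha u^{2-\alpha}/\Gamma(3-\alpha)$.

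The only delicate point is the base point $u=0$. When $p>0$ we have $w(0)=0$, so $d/dP$ is undefined there and the equations hold for $u>0$ only. I would state this restriction explicitly.
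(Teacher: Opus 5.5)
Your proposal is correct and follows the paper's proof, which consists of the chain rule $d/dP=w^{-1}\,d/du$ applied to the classical Frenet equations together with \eqref{eq:scaling}. You also check explicitly that $P_F=s_F$ (via $w_F=ds_F/du$ and $s_F(0)=0$) and that $P_E=\alpha u^{2-\alpha}/\Gamma(3-\alpha)$ (via $\Gamma(3-\alpha)=(2-\alpha)\Gamma(2-\alpha)$); the paper leaves these identifications implicit, and your restriction to $u>0$ when $p>0$ is a sensible addition.
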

\begin{proof}
$d/dP=w^{-1}d/du$, the classical Frenet equations and \eqref{eq:scaling}.
\qed
\end{proof}
In either convention the fractional Frenet apparatus is therefore the classical one reparametrized by $P$. In particular $\kappa_\alpha=|d\bt/dP|$ has the dimension of $1/P$, that is, of $[\text{length}]^{-(1+p)}$.

\begin{remark}\label{rem:base}
Both conventions require a base point ($u=0$), which is part of the data; this is a genuine feature of Caputo (memory) operators. For $\alpha<1$, $w(u)\to0$ as $u\to0^+$, so $|\kappa_\alpha(u)|\to\infty$ at the base point unless $\kappa$ vanishes there.
\end{remark}

\section{The Gielis chart: what ``regularization'' means}\label{sec:chart}
In this section $r$ is a continuous, positive and $2\pi$-periodic radial function (Lemma~\ref{lem:periodic}); $\theta(x)=\arg x\in\R/2\pi\mathbb Z$, so that $r\circ\theta$ is well defined and continuous on $\R^2\setminus\{0\}$.
\begin{definition}[Gielis chart]\label{def:chart}
\[
\Phi:\R^2\setminus\{0\}\to\R^2\setminus\{0\},\qquad \Phi(x)=\frac{x}{r(\theta(x))},\qquad g=\Phi^*\delta,
\]
where $\delta$ is the Euclidean metric.
\end{definition}

\begin{proposition}\label{prop:chart}
(a) $\Phi$ is a homeomorphism with inverse $y\mapsto r(\theta(y))\,y$, and it maps the Gielis curve $\mathcal C=\{x:|x|=r(\theta(x))\}$ onto the unit circle.
(b) For $k\ge1$: $\Phi\in C^k$ if and only if $r\in C^k$. In polar coordinates $(\rho,\theta)$,
\[
g=r^{-2}\,d\rho^2-2\rho\,r'r^{-3}\,d\rho\,d\theta+\rho^2\bigl(r^{-2}+r'^2r^{-4}\bigr)\,d\theta^2 ,
\]
so, if $r\in C^k$, the coefficients of $g$ are $C^{k-1}$, and they are not $C^k$ unless $r\in C^{k+1}$.
(c) If $r\in C^2$, then $\Phi$ is a $C^2$ diffeomorphism and an isometry from $(\R^2\setminus\{0\},g)$ onto $(\R^2\setminus\{0\},\delta)$; hence $\mathcal C$ has $g$-length $2\pi$ and $g$-geodesic curvature identically $1$. If $r\in C^3$, then $g\in C^2$ and its Gaussian curvature vanishes.
\end{proposition}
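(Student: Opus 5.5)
Everything will be done in polar coordinates, where $\Phi$ takes the form $(\rho,\theta)\mapsto(\rho/r(\theta),\theta)$. The whole proposition then becomes a statement about one function of one variable.

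\emph{Part (a).} Continuity of $\Phi$ follows because $x\mapsto|x|$, $x\mapsto x/|x|$ and $r\circ\theta$ are continuous and $r>0$ on $\R^2\setminus\{0\}$. Put $\Psi(y)=r(\theta(y))\,y$. Both maps preserve the argument ($\theta(\Phi(x))=\theta(x)$, since $r>0$), so $\Psi\circ\Phi=\mathrm{id}$ and $\Phi\circ\Psi=\mathrm{id}$. Continuity of $\Psi$ follows in the same way. On $\mathcal C$ we have $|\Phi(x)|=|x|/r(\theta(x))=1$, and conversely $\Psi$ maps the unit circle into $\mathcal C$. Here $2\pi$-periodicity (Lemma~\ref{lem:periodic}) is exactly what makes $r\circ\theta$ well defined.

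\emph{Part (b).} The polar chart is an analytic diffeomorphism locally, so $\Phi\in C^k$ if and only if $(\rho,\theta)\mapsto(\rho/r(\theta),\theta)$ is $C^k$. This holds if and only if $1/r\in C^k$, and since $r>0$ that is equivalent to $r\in C^k$. (For the converse, restrict to $\rho=1$.)

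For the metric, write $\Phi$ in Cartesian form $\Phi=(\rho/r)(\cos\theta,\sin\theta)$. Then
\[
d\Phi=\Bigl(\frac{d\rho}{r}-\frac{\rho r'}{r^2}\,d\theta\Bigr)\mathbf e_r+\frac{\rho}{r}\,d\theta\,\mathbf e_\theta .
\]
Summing the squares of the two components gives the stated $g$. Its coefficients are $r^{-2}$, $\rho r' r^{-3}$ and $\rho^2(r^{-2}+r'^2r^{-4})$, each a polynomial in $\rho$, $r^{-1}$, $r'$. So they are $C^{k-1}$ when $r\in C^k$.

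The sharpness claim ("not $C^k$ unless $r\in C^{k+1}$") is where care is needed. I would argue as follows. Suppose all coefficients were $C^k$. Then $g_{\rho\rho}=r^{-2}$ and $g_{\rho\theta}$ at $\rho=1$ would both be $C^k$. Hence $r'=-g_{\rho\theta}|_{\rho=1}\,r^{3}$ would be $C^k$, which means $r\in C^{k+1}$. I expect this to be the only delicate step, because one must pick a coefficient that isolates $r'$ linearly; the cross term does exactly that. Regularity in $\theta$ is all that matters here, since the $\rho$-dependence is polynomial.

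\emph{Part (c).} If $r\in C^2$, part (b) gives that $\Phi$ and $\Psi$ are $C^2$, so $\Phi$ is a $C^2$ diffeomorphism. By the definition $g=\Phi^*\delta$, it is tautologically an isometry onto $(\R^2\setminus\{0\},\delta)$.

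Length and geodesic curvature are defined from first and second derivatives of $g$ along the curve, and both are preserved by $C^2$ isometries. Therefore the $g$-length of $\mathcal C$ and its $g$-geodesic curvature equal those of $\Phi(\mathcal C)$, the unit circle: length $2\pi$ and curvature $1$. For the geodesic curvature, the Christoffel symbols require only $g\in C^1$, which holds because $r\in C^2$.

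If $r\in C^3$, then $g\in C^2$ by (b), so the Gaussian curvature is defined. Since $\Phi$ is then a $C^3$ diffeomorphism, the Theorema Egregium (curvature is natural under pullback) gives $K_g=K_\delta\circ\Phi=0$.

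Alternatively, one could compute $\det g=\rho^2r^{-4}$ and verify $K=0$ from the Brioschi formula. I would mention this only as a check.
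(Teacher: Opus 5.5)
Your proposal is correct and follows the paper's own proof essentially step by step. It uses the polar form $(\rho,\theta)\mapsto(\rho/r(\theta),\theta)$ for (a) and for the regularity equivalence, and gets $g$ by expanding $dR=d\rho/r-\rho r'\,d\theta/r^2$. Sharpness comes from the mixed coefficient, which the paper writes as $g_{\rho\theta}=\frac{\rho}{2}(r^{-2})'$; your appeal to $g_{\rho\rho}$ there is harmless but unnecessary, since $r\in C^k$ already gives $r^3\in C^k$. Part (c) follows from invariance of length, geodesic curvature and Gaussian curvature under the isometry $\Phi$.
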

\begin{proof}
(a) Since $r>0$, $\theta(\Phi(x))=\theta(x)$ and $|\Phi(x)|=|x|/r(\theta(x))$; hence $y\mapsto r(\theta(y))y$ is a two-sided inverse, both maps are continuous, and $|\Phi|=1$ exactly on $\mathcal C$.
(b) In polar coordinates $\Phi(\rho,\theta)=(R,\theta)$ with $R=\rho/r(\theta)$, so the regularity of $\Phi$ is that of $1/r$, i.e.\ of $r$. Inserting $dR=d\rho/r-\rho r'd\theta/r^2$ into $\delta=dR^2+R^2d\theta^2$ gives the formula. Its coefficients involve $r$ and $r'$, hence are $C^{k-1}$; the mixed coefficient $g_{\rho\theta}=-\rho r'r^{-3}=\frac{\rho}{2}(r^{-2})'$ is $C^k$ only if $r^{-2}\in C^{k+1}$, i.e.\ $r\in C^{k+1}$.
(c) By construction; isometries preserve length, geodesic curvature and Gaussian curvature.
\qed
\end{proof}

\begin{corollary}\label{cor:transport}
If the governing exponent of a zero $\theta_*$ (Lemma~\ref{lem:regular}) equals $1$ --- for example for the rhombus $(4;1,1,1)$, or any $(m;n_1,1,1)$ --- then $r'$ jumps at $\theta_*$ (Corollary~\ref{cor:geom}); $\Phi$ is Lipschitz but not $C^1$ across the ray $\theta=\theta_*$, and $g$ has a jump discontinuity across it. If the governing exponent is smaller than $1$ (a cusp), $r'$ is unbounded near $\theta_*$ and $\Phi$ is not even locally Lipschitz. In these cases --- the ones the regularization is meant for --- the geodesic curvature of $\mathcal C$ with respect to $g$ is not defined in the classical sense; whenever it is defined ($r\in C^2$) it equals $1$, but this is encoded in $\Phi$: the non-smoothness of $\mathcal C$ has been moved into the metric.
\end{corollary}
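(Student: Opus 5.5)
The plan is to reduce everything to the local behaviour of $r$ and $r'$ near $\theta_*$, using Corollary~\ref{cor:geom} together with the explicit form of $\Phi$ and $g$ from Proposition~\ref{prop:chart}(b).

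\emph{Corner case (governing exponent $1$).} Corollary~\ref{cor:geom} gives one-sided limits $r'(\theta_*^\pm)$, which are distinct and finite: by its proof they equal $\mp\frac{1}{n_1}A^{-1/n_1-1}h(\theta_*)$. I would first show that $r$ is Lipschitz near $\theta_*$. It is continuous and piecewise $C^1$, and $r'$ is bounded on both sides, so the mean value theorem applies on each side. In polar coordinates $\Phi(\rho,\theta)=(\rho/r(\theta),\theta)$, and $1/r$ is Lipschitz on compact angular neighbourhoods because $r>0$. On compact subsets of $\R^2\setminus\{0\}$ the maps $(\rho,\theta)\leftrightarrow x$ are smooth, so $\Phi$ is locally Lipschitz. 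It is not $C^1$ across the ray because $\partial_\theta R=-\rho r'/r^2$ has different one-sided limits. For $g$, the coefficients in Proposition~\ref{prop:chart}(b) contain $r'$ linearly in $g_{\rho\theta}$ and quadratically in $g_{\theta\theta}$. The mixed coefficient $g_{\rho\theta}=-\rho r'r^{-3}$ already jumps, since $r'(\theta_*^+)\neq r'(\theta_*^-)$. So $g$ has a jump discontinuity across the ray.

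\emph{Cusp case (exponent $<1$).} Corollary~\ref{cor:geom} gives $|r'|\to\infty$. Suppose $\Phi$ were locally Lipschitz near a point of the ray, say at radius $\rho_0$. Then $\theta\mapsto R(\rho_0,\theta)=\rho_0/r(\theta)$ would be Lipschitz, being a restriction of $\Phi$ composed with smooth maps. Hence $1/r$, and therefore $r$, would be Lipschitz near $\theta_*$. But $r$ is $C^1$ off $\theta_*$ with $|r'|\to\infty$, and integrating $r'$ over $[\theta_*,\theta_*+x]$ contradicts any bound $|r(\theta)-r(\theta_*)|\le M|x|$. The more robust way to say this is that a Lipschitz function has $|r'|\le M$ wherever it is differentiable, which the unbounded derivative violates.

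\emph{Final assertions.} For geodesic curvature, the classical definition needs $g\in C^1$ (Christoffel symbols) and a $C^2$ curve. In both singular cases $g$ is not even continuous, or its coefficients are unbounded because $r'^2$ appears in $g_{\theta\theta}$. When $r\in C^2$, Proposition~\ref{prop:chart}(c) gives the value $1$. The main obstacle is only the precise meaning of ``not defined in the classical sense''. I would state explicitly that the Christoffel symbols involve $\partial_\theta g_{\rho\theta}\propto r''$, which has a Dirac-type singularity or is unbounded. I would then note that the resulting identity $\kappa_g\equiv1$ carries no information, because the singular data sit in $g$ through $r'$.
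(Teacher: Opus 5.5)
Your proposal is correct and takes essentially the route the paper has in mind. The paper gives no separate proof: it reads the corollary off Corollary~\ref{cor:geom} (jump, resp.\ blow-up, of $r'$) and Proposition~\ref{prop:chart}(b),(c), namely the polar form $R=\rho/r(\theta)$, the jump of the mixed coefficient $g_{\rho\theta}=-\rho r'r^{-3}$, and geodesic curvature $1$ when $r\in C^2$. Your added details are Lipschitz continuity of $r$ from its bounded one-sided derivatives, and restricting $\Phi$ to a circle $\rho=\rho_0$ to rule out local Lipschitz continuity at a cusp; they make explicit what the paper leaves to the reader.
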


Figure~\ref{fig:chart} illustrates Proposition~\ref{prop:chart} and Corollary~\ref{cor:transport} for the rhombus.

\begin{figure}[H]\centering
\includegraphics[width=\textwidth]{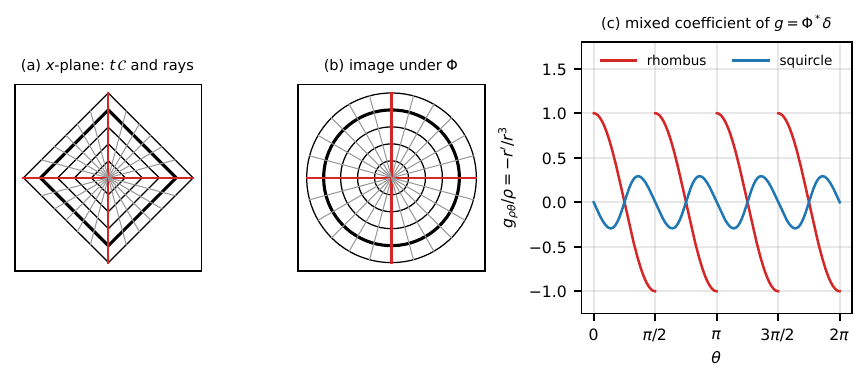}
\caption{The Gielis chart $\Phi(x)=x/r(\theta(x))$ for the rhombus $(4;1,1,1)$ (Definition~\ref{def:chart}, Proposition~\ref{prop:chart}). (a) The curve $\mathcal C$ (thick), its dilates $t\,\mathcal C$ for $t=0.25$, $0.5$, $0.75$, $1.25$, and rays $\theta=\mathrm{const}$ every $15^\circ$; red: the rays through the corners. (b) Their images under $\Phi$: concentric circles and the same rays. (c) The mixed coefficient $g_{\rho\theta}/\rho=-r'/r^3$ of $g=\Phi^*\delta$: for the rhombus it jumps across the corner rays $\theta=k\pi/2$ (Corollary~\ref{cor:transport}), for the squircle $(4;4,4,4)$ it is continuous.}\label{fig:chart}
\end{figure}

\begin{remark}\label{rem:superelliptic}
We compare with \cite{Ozdemir2026} on two points.

(i) The superelliptic inner product $\langle X,Y\rangle_{SE}=r(t)^{-2}\sum_i\sigma_iX_iY_i$ \cite[eq.~(7)]{Ozdemir2026} depends on the parameter $t$ of a curve; it defines a Riemannian metric on the ambient space only if $t$ is expressed through the point, for instance $t=\theta(x)$ in the plane. With this reading and $\sigma_i=1$ it is the conformal metric $\hat g=r(\theta(x))^{-2}\,\delta$ on $\R^2\setminus\{0\}$. For $r\in C^1$ the $\hat g$-length of $\mathcal C$ is $\int_0^{2\pi}\sqrt{1+(r'/r)^2}\,d\varphi$, which exceeds $2\pi$ unless $r$ is constant (numerically by $3.0\%$, $9.4\%$ and $38.6\%$ for $(4;4,4,4)$, $(5;7,7,7)$ and $(8;4,6,6)$). Hence $\mathcal C$ is not a unit circle for $\hat g$; a metric in which $\mathcal C$ is a unit circle is $g=\Phi^*\delta$ of Definition~\ref{def:chart} and Proposition~\ref{prop:chart}, which is not conformal ($g_{\rho\theta}\neq0$ where $r'\neq0$).

(ii) The star derivative $D^*=\G\circ D$ \cite[eq.~(23)]{Ozdemir2026} is not the derivative of the transformed curve. For the unit circle $\mathbf e(t)=(\cos t,\sin t)$ and $\sigma_i=1$, the transformed curve $\G(\mathbf e)(t)=r(t)\mathbf e(t)$ has velocity $r'(t)\mathbf e(t)+r(t)\mathbf e'(t)$, whereas $\G(\mathbf e')(t)=r(t)\mathbf e'(t)$; the two differ by $r'(t)\mathbf e(t)$, so $\G(\mathbf e')$ is not even tangent to $\G(\mathbf e)$ where $r'\neq0$ (for $(6;6,4,4)$ the angle reaches $19.5^\circ$, Figure~\ref{fig:star}). For a general curve $\mathbf c$ (denoted by another letter in \cite{Ozdemir2026}), the relation $\mathbf c^*_{SE}=\G(\mathbf c')$ of \cite[Proposition~1]{Ozdemir2026}, obtained ``by differentiating'', drops the corresponding term, and the constant-curvature statements of \cite{Ozdemir2026} (e.g.\ their Figure~5) concern $\G(\mathbf c')$ rather than the geometry of the curve $\G(\mathbf c)$. In \cite[Remark~2]{Ozdemir2026} the order ``differentiate first, then scale'' is motivated by interpreting $r$ as a metric radius; our objection is that the resulting vector field is not the velocity of any parametrization of $\G(\mathbf c)$.

This is why we base the present work on the chart $\Phi$, a genuine change of coordinates, and compute all differential invariants in the ordinary Euclidean sense.
\end{remark}

\begin{figure}[H]\centering
\includegraphics[width=\textwidth]{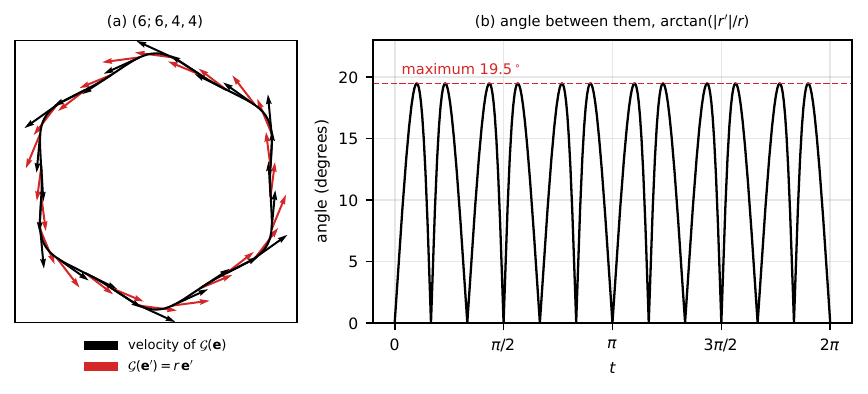}
\caption{Remark~\ref{rem:superelliptic}(ii) for $(6;6,4,4)$ and $\sigma_i=1$. (a) At 24 points: the velocity $r'\mathbf e+r\mathbf e'$ of the transformed curve $\G(\mathbf e)$ (black) and the vector $\G(\mathbf e')=r\mathbf e'$ (red); they differ by $r'\mathbf e$. (b) The angle between them, $\arctan(|r'|/r)$: it vanishes where $r'=0$ and reaches $19.5^\circ$.}\label{fig:star}
\end{figure}

\section{Fractional curvature of Gielis curves}\label{sec:frac}
For a closed Gielis curve with $r\in C^2$ and base point $\varphi_0$ we write $\kappa_\alpha(\varphi)=\kappa(\varphi)/w(u(\varphi))$, with $\kappa$, $u$ from \eqref{eq:kappa} and $w\in\{w_E,w_F\}$; superscripts E, F indicate the convention when it matters.

\begin{theorem}[Scaling law]\label{thm:scaling}
Let $\mathcal C$ be a regular $C^2$ curve in $\R^2$ or $\R^3$ ($C^3$ with $\kappa>0$ for the statement on $\tau_\alpha$), $c>0$, and let $c\,\mathcal C$ be its image under the homothety $x\mapsto cx$, the base point being mapped to the base point. At corresponding points
\[
\kappa_\alpha\longmapsto c^{-(1+p)}\kappa_\alpha,\qquad \tau_\alpha\longmapsto c^{-(1+p)}\tau_\alpha,
\]
that is, with exponent $2-\alpha$ in convention E and $1/\alpha$ in convention F.
\end{theorem}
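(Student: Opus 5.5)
We plan to reduce everything to the scaling relation \eqref{eq:scaling}, $\kappa_\alpha=\kappa/w$ and $\tau_\alpha=\tau/w$ with $w(u)=C\,u^{p}$, which holds in both conventions by Lemma~\ref{lem:param} and the explicit weights $w_E$, $w_F$. Fix a regular parametrization $x(t)$ of $\mathcal C$ and parametrize $c\,\mathcal C$ by $\tilde x(t)=c\,x(t)$, so that corresponding points share the parameter $t$. Since the base point is mapped to the base point, the arc length on $c\,\mathcal C$ measured from its base point is $\tilde u(t)=\int|\tilde x'|\,dt=c\,u(t)$.

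Next we record the classical behaviour under the homothety. The unit tangent, normal and binormal are unchanged, because $\tilde x'=c\,x'$ and all further derivatives scale by $c$. The classical curvature and torsion scale as $\tilde\kappa=c^{-1}\kappa$ and $\tilde\tau=c^{-1}\tau$. This follows from $d\bt/d\tilde u=c^{-1}d\bt/du$ together with the Frenet equations, or equally from the determinant formulas. For plane curves the sign of $\kappa$ is preserved, since the orientation is unchanged for $c>0$. The regularity hypotheses ($C^2$, respectively $C^3$ with $\kappa>0$) are clearly invariant under the homothety.

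The two facts now combine directly:
\[
\tilde\kappa_\alpha=\frac{\tilde\kappa}{w(\tilde u)}=\frac{c^{-1}\kappa}{C\,c^{p}u^{p}}=c^{-(1+p)}\,\kappa_\alpha ,
\]
and the same computation applies to $\tau_\alpha$. The key point is that $w$ is a pure power of the Euclidean arc length $u$. In convention~F this uses the closed form \eqref{eq:sF}: the fractional arc length $s_F$ is a fixed function of $u$ determined solely by $\alpha$, with no dependence on the curve. Finally we read off the exponents. Convention~E has $1+p_E=2-\alpha$, and convention~F has $1+p_F=1+(1-\alpha)/\alpha=1/\alpha$. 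At $u=0$ both sides may be infinite (Remark~\ref{rem:base}), so we state the identity for $u>0$.

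There is no real obstacle in this argument. The only point needing care is to confirm that the two $s$-parametrizations correspond under the homothety through $u\mapsto cu$ and not through some other rescaling. This holds because $s$ is defined intrinsically from $u$, with $s=u$ in convention~E and $s=s_F(u)$ in convention~F. In particular, the fractional parameter is \emph{not} scaled by $c$; it is scaled by $c^{1+p}$. This is consistent with the dimensional remark after Lemma~\ref{lem:frenet}, and it gives an independent check: $\kappa_\alpha=|d\bt/dP|$ and $P(cu)=c^{1+p}P(u)$.
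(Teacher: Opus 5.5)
Your proof is correct and matches the paper's own proof: $\kappa\mapsto\kappa/c$, $\tau\mapsto\tau/c$, $u\mapsto cu$ and $w(cu)=c^{p}w(u)$, combined through $\kappa_\alpha=\kappa/w$ and $\tau_\alpha=\tau/w$. One wording slip: in convention E the Caputo parameter $s=u$ does scale by $c$. What scales by $c^{1+p}$ in both conventions is $P$, which equals $s$ only in convention F; your final check via $P(cu)=c^{1+p}P(u)$ uses $P$ correctly.
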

\begin{proof}
$\kappa\mapsto\kappa/c$, $\tau\mapsto\tau/c$, $u\mapsto cu$, and $w(cu)=c^p\,w(u)$.
\qed
\end{proof}

\begin{corollary}\label{cor:param}
Let $n_2=n_3=n$. Replacing $(a,b)$ by $(ca,cb)$ multiplies $r$ by $\hat c=c^{n/n_1}$; it is the homothety of ratio $\hat c$, and $\kappa_\alpha\mapsto\hat c^{-(1+p)}\kappa_\alpha$. If $n_2\neq n_3$ and $c\neq1$, the ratio of the new and the old radial function equals $c^{n_2/n_1}$ at the zeros of $\sin(m\varphi/4)$ and $c^{n_3/n_1}$ at the zeros of $\cos(m\varphi/4)$; hence the parameter change is not a homothety centred at the origin and Theorem~\ref{thm:scaling} does not apply.
\end{corollary}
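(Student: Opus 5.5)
The plan is a direct computation from the superformula \eqref{eq:gielis}, with Theorem~\ref{thm:scaling} supplying the curvature statement.

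\emph{Case $n_2=n_3=n$.} With $(a,b)$ replaced by $(ca,cb)$, the bracket becomes
\[
\bigl|\tfrac{1}{ca}\cos\tfrac{m\varphi}{4}\bigr|^{n}+\bigl|\tfrac{1}{cb}\sin\tfrac{m\varphi}{4}\bigr|^{n}=c^{-n}A(\varphi),
\]
where $A$ is the bracket of the original curve. Raising to the power $-1/n_1$ gives $\tilde r(\varphi)=c^{n/n_1}r(\varphi)=\hat c\,r(\varphi)$ for every $\varphi$. The new curve is therefore $\hat c\,\gamma(\varphi)$, which is the image of $\gamma$ under the homothety $x\mapsto\hat c x$ centred at the origin. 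Corresponding points share the same $\varphi$, so the base point $\varphi_0$ is mapped to the base point. Regularity is unchanged: by Lemma~\ref{lem:regular} it depends only on $n_2,n_3$, and multiplying by a constant does not alter it. Theorem~\ref{thm:scaling} with ratio $\hat c$ then gives $\kappa_\alpha\mapsto\hat c^{-(1+p)}\kappa_\alpha$.

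\emph{Case $n_2\neq n_3$, $c\neq1$.} Evaluate the ratio $\tilde r/r$ at the two kinds of special points.
\begin{itemize}
\item At a zero of $\sin(m\varphi/4)$, only the cosine term survives, and $|\cos|=1$ there. Hence $r=a$ and $\tilde r=(ca)$, each raised to the appropriate power; explicitly $r=(a^{-n_2})^{-1/n_1}$ and $\tilde r=((ca)^{-n_2})^{-1/n_1}$, so $\tilde r/r=c^{n_2/n_1}$.
\item At a zero of $\cos(m\varphi/4)$, the same argument gives $\tilde r/r=c^{n_3/n_1}$.
\end{itemize}
Both kinds of zeros exist: for example $\varphi=0$ and $\varphi=2\pi/m$.

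Since $c\neq1$ and $n_2\neq n_3$, we have $c^{n_2/n_1}\neq c^{n_3/n_1}$, so the ratio $\tilde r/r$ is not constant. A homothety centred at the origin, $x\mapsto kx$, would map the ray $\theta=\varphi$ to itself and hence force $\tilde r(\varphi)=k\,r(\varphi)$ for all $\varphi$. The reason is that each ray meets a star-shaped polar curve exactly once, because $r>0$. This contradicts the non-constant ratio.

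Theorem~\ref{thm:scaling} is stated only for homotheties, so it does not apply in this case. The only delicate point is the star-shapedness argument just used, which identifies points on the same ray; everything else is immediate.
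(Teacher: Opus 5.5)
Your proof is correct and follows the paper's argument. When $n_2=n_3=n$ the new bracket is $c^{-n}A$, so $r\mapsto\hat c\,r$ and Theorem~\ref{thm:scaling} applies with ratio $\hat c$ (base point to base point); when $n_2\neq n_3$ you evaluate $\tilde r/r$ where one of the two terms of the bracket vanishes. Your extra ray argument makes explicit a step the paper's one-line proof leaves implicit: each ray meets the star-shaped curve once, so a homothety $x\mapsto kx$ with $k>0$ would force $\tilde r/r$ to be constant, and $k>0$ is exactly the case relevant to Theorem~\ref{thm:scaling}.
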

\begin{proof}
Write the bracket in \eqref{eq:gielis} as $A=A_{\cos}+A_{\sin}$ with $A_{\cos}=|\cos\frac{m\varphi}{4}/a|^{n_2}$ and $A_{\sin}=|\sin\frac{m\varphi}{4}/b|^{n_3}$. The new radial function is $(c^{-n_2}A_{\cos}+c^{-n_3}A_{\sin})^{-1/n_1}$; evaluate where $A_{\sin}=0$, resp.\ $A_{\cos}=0$, and use Theorem~\ref{thm:scaling}.
\qed
\end{proof}
Table~\ref{tab:scaling} confirms Corollary~\ref{cor:param} numerically, and Figure~\ref{fig:homothety} shows both of its cases. Note that $c^{-(2-\alpha)}$, i.e.\ the convention-E exponent applied to $c$ instead of $\hat c$, is correct only if $\hat c=c$, i.e.\ $n_2=n_3=n_1$; for $(6;6,4,4)$ and $\alpha=1/2$ it would give $0.3536$ instead of $0.5$.

\begin{table}[H]\centering
\caption{Check of Corollary~\ref{cor:param} with $c=2$: ratio of $\kappa_\alpha$ for $(ca,cb)$ and $(a,b)$ at corresponding points versus $\hat c^{-(1+p)}$, $\hat c=c^{n/n_1}$; median over the grid points with $|\kappa|>10^{-2}\max|\kappa|$, where all pointwise deviations are below $10^{-5}$. Closer to the zeros of $\kappa$ the pointwise ratio is affected by rounding errors of the spectral second derivative.}\label{tab:scaling}
\begin{tabular}{lcc rr rr}
\toprule
 & & & \multicolumn{2}{c}{convention E} & \multicolumn{2}{c}{convention F}\\
\cmidrule(lr){4-5}\cmidrule(lr){6-7}
curve & $\hat c=c^{n/n_1}$ & $\alpha$ & measured & $\hat c^{-(2-\alpha)}$ & measured & $\hat c^{-1/\alpha}$ \\
\midrule
$(4;4,4,4)$ & 2.0000 & 0.5 & 0.353553 & 0.353553 & 0.250000 & 0.250000 \\
$(4;4,4,4)$ & 2.0000 & 0.8 & 0.435275 & 0.435275 & 0.420448 & 0.420448 \\
$(6;6,4,4)$ & 1.5874 & 0.5 & 0.500000 & 0.500000 & 0.396850 & 0.396850 \\
$(6;6,4,4)$ & 1.5874 & 0.8 & 0.574349 & 0.574349 & 0.561231 & 0.561231 \\
$(8;4,6,6)$ & 2.8284 & 0.5 & 0.210224 & 0.210224 & 0.125000 & 0.125000 \\
$(8;4,6,6)$ & 2.8284 & 0.8 & 0.287175 & 0.287175 & 0.272627 & 0.272627 \\
\bottomrule
\end{tabular}
\end{table}

\begin{figure}[H]\centering
\includegraphics[width=\textwidth]{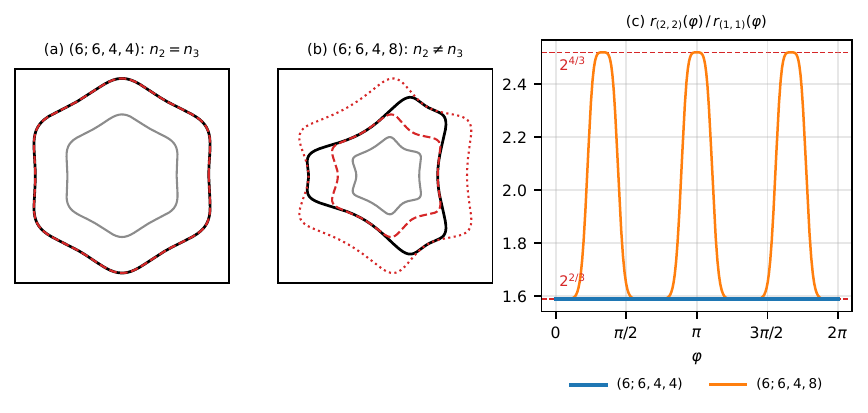}
\caption{Corollary~\ref{cor:param} with $c=2$. (a) $(6;6,4,4)$, $n_2=n_3$: the curves with $(a,b)=(1,1)$ (grey) and $(2,2)$ (black) are homothetic; the grey curve scaled by $\hat c=2^{2/3}$ (dashed) coincides with the black one. (b) $(6;6,4,8)$, $n_2\neq n_3$: the grey curve scaled by $2^{2/3}$ (dashed) lies inside the black one and scaled by $2^{4/3}$ (dotted) outside it; neither coincides with it. (c) The ratio of the two radial functions: constant $2^{2/3}$ for $(6;6,4,4)$, between $2^{2/3}$ and $2^{4/3}$ for $(6;6,4,8)$.}\label{fig:homothety}
\end{figure}

\begin{theorem}\label{thm:memory}
Let $r\in C^2$ be $2\pi/K$-periodic with $K\ge2$ (Lemma~\ref{lem:periodic}), let $L$ be the length of the curve and $\ell=L/K$. Then $\kappa(j\ell+\xi)=\kappa(\xi)$ and
\[
\kappa_\alpha(j\ell+\xi)=\frac{\kappa(\xi)}{w(j\ell+\xi)},\qquad j=0,\dots,K-1,\ \ \xi\in(0,\ell).
\]
If $w$ is strictly increasing, which is the case for $0<\alpha<1$ in both conventions, and $\kappa(\xi)\neq0$, then $|\kappa_\alpha(j\ell+\xi)|$ is strictly decreasing in $j$. Since $\kappa\not\equiv0$, $\kappa_\alpha$ is not invariant under $u\mapsto u+j\ell$ for any $j\in\{1,\dots,K-1\}$: none of the rotational symmetries of the curve survives.
\end{theorem}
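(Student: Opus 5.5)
The plan is to reduce everything to the rotational symmetry of the Euclidean curvature as a function of arc length, and then read off the memory effect from the weight $w$ in \eqref{eq:scaling}.

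\emph{Step 1: periodicity of $\kappa$ in arc length.} Since $r$ is $2\pi/K$-periodic, the rotation $R$ by $2\pi/K$ maps $\gamma(\varphi)$ to $\gamma(\varphi+2\pi/K)$. The quantities $\nu$ and $\kappa$ in \eqref{eq:kappa} depend on $\varphi$ only through $r,r',r''$, so $\nu(\varphi+2\pi/K)=\nu(\varphi)$ and $\kappa(\varphi+2\pi/K)=\kappa(\varphi)$. Integrating $\nu$ over one period gives the same value $\ell$ on every period. As $K$ periods make up the full turn, $\ell=L/K$. Moreover, for any base point $\varphi_0$,
\[
u(\varphi+2\pi/K)=u(\varphi)+\ell .
\]
This uses the periodicity of $\nu$ and a shift of the integration variable. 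Hence, viewed as a function of $u$, the curvature satisfies $\kappa(j\ell+\xi)=\kappa(\xi)$.

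\emph{Step 2: the formula for $\kappa_\alpha$.} The formula $\kappa_\alpha(j\ell+\xi)=\kappa(\xi)/w(j\ell+\xi)$ then follows at once from \eqref{eq:scaling}. The point is that the weight $w$ is evaluated at the accumulated arc length $u$ from the base point, not at the local position $\xi$.

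\emph{Step 3: monotonicity of the weight.} Both conventions have $w(u)=Cu^p$ with $C>0$. For $0<\alpha<1$ the exponents are $p_E=1-\alpha>0$ and $p_F=(1-\alpha)/\alpha>0$, so $w$ is strictly increasing and positive on $(0,\infty)$. Since $\xi>0$, each argument $j\ell+\xi$ is positive. If $\kappa(\xi)\neq0$, then $|\kappa_\alpha(j\ell+\xi)|=|\kappa(\xi)|/w(j\ell+\xi)$ is strictly decreasing in $j$.

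\emph{Step 4: no rotational symmetry survives.} Fix $j\in\{1,\dots,K-1\}$. We need some $\xi$ with $\kappa(\xi)\neq0$. A closed $C^2$ curve has total curvature $2\pi$ (rotation index $1$ for a star-shaped polar graph), so $\kappa\not\equiv0$. By Step 1 it is then nonzero at some $\xi\in(0,\ell)$. Continuity is used here to avoid the endpoints $\xi=0,\ell$. At such a $\xi$, Step 3 gives $|\kappa_\alpha(j\ell+\xi)|<|\kappa_\alpha(\xi)|$. Therefore $\kappa_\alpha(u+j\ell)\neq\kappa_\alpha(u)$ at $u=\xi$, and $\kappa_\alpha$ is not invariant under the shift.

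Step 1 needs the most care. One has to verify that the arc-length shift is exactly $\ell$ independently of the base point $\varphi_0$. The only real argument beyond this is the justification that $\kappa\not\equiv0$, handled in Step 4. Everything else is a direct substitution into \eqref{eq:scaling}.
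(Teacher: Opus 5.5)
Your proof is correct and follows the paper's argument. Periodicity of $\nu$ and $\kappa$ under $\varphi\mapsto\varphi+2\pi/K$ gives $u(\varphi+2\pi/K)=u(\varphi)+\ell$, and the rest follows from \eqref{eq:scaling} with $w=Cu^p$, $p>0$. You also fill in two details the paper leaves implicit, and both are sound: why $\kappa\not\equiv0$ (the closed polar curve has total turning $2\pi$), and why continuity lets you pick $\xi$ in the open interval $(0,\ell)$.
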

\begin{proof}
The rotation by $2\pi/K$ maps $\gamma(\varphi)$ to $\gamma(\varphi+2\pi/K)$ and preserves arc length and signed curvature; since $\nu$ is $2\pi/K$-periodic, $u(\varphi+2\pi/K)=u(\varphi)+\ell$. Thus $\kappa$ is $\ell$-periodic in $u$, and $w$ is strictly increasing because $p>0$ for $\alpha<1$.
\qed
\end{proof}

\begin{remark}\label{rem:basepeak}
By Remark~\ref{rem:base}, $|\kappa_\alpha|$ is unbounded near the base point whenever $\kappa(\varphi_0)\neq0$; for $(6;6,4,4)$, $\kappa(0)=-1/2$ and $\kappa_\alpha\to-\infty$ as $u\to0^+$. Comparisons between arcs must therefore be made at corresponding points, as in Theorem~\ref{thm:memory}, not via suprema over arcs.
\end{remark}

For $(6;6,4,4)$ we have $a=b$ and $n_2=n_3$, hence $K=m=6$ congruent arcs of length $\ell=1.1379$. Each lobe is symmetric under a reflection, so $\kappa$ is symmetric about the arc midpoints $u_j=(2j+1)\ell/2$, and the computation shows that the lobe tips (maxima of $\kappa$) lie there; Theorem~\ref{thm:memory} therefore predicts the ratios $(2j+1)^{-p}$. Figure~\ref{fig:kappa_alpha} and Table~\ref{tab:peaks} confirm this: for $\alpha=1/2$ the second lobe tip retains $1/\sqrt3\approx58\%$ (convention E) and $1/3\approx33\%$ (convention F) of the value at the first. This is a feature of the Caputo operator (non-locality with a distinguished starting point), not of the curve. A periodic Caputo operator restarted at each symmetry point would restore the symmetry; we leave its construction open. Figure~\ref{fig:combs} shows the decay of $\kappa_\alpha$ from lobe to lobe along the whole curve.

\begin{figure}[H]\centering
\includegraphics[width=\textwidth]{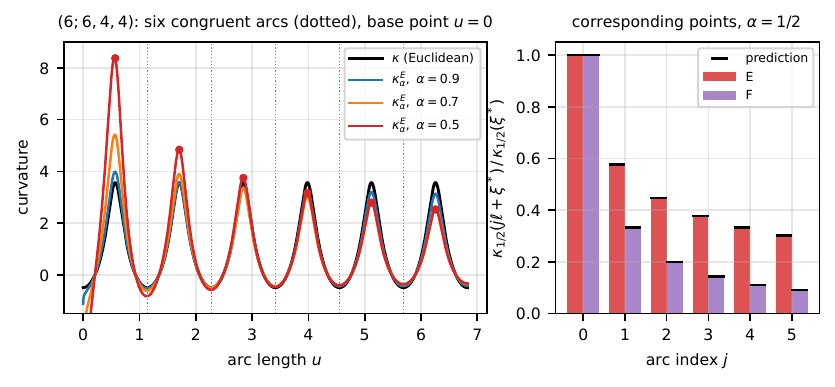}
\caption{Left: Euclidean curvature $\kappa$ and $\kappa^E_\alpha$ ($\alpha=0.9$, $0.7$, $0.5$) of the $(6;6,4,4)$ curve; dotted lines separate the $K=6$ congruent arcs; since $\kappa(0)=-1/2$, $\kappa^E_\alpha\to-\infty$ as $u\to0^+$ (clipped; Remark~\ref{rem:basepeak}). Right: $\kappa_{1/2}$ at the lobe tips $u_j=j\ell+\xi^*$, normalized by $j=0$, in both conventions; black marks: Theorem~\ref{thm:memory}.}\label{fig:kappa_alpha}
\end{figure}

\begin{table}[H]\centering
\caption{Fractional curvature at corresponding points (lobe tips, $\xi^*=\ell/2$) of the $K=6$ congruent arcs of $(6;6,4,4)$, $\alpha=1/2$, base point $\varphi_0=0$. Ratios are relative to $j=0$; in parentheses the prediction $(\xi^*/(j\ell+\xi^*))^{p}=(2j+1)^{-p}$ of Theorem~\ref{thm:memory}.}\label{tab:peaks}
\begin{tabular}{rrrrrrr}
\toprule
$j$ & $u_j=j\ell+\xi^*$ & $\kappa(u_j)$ & $\kappa^E_{1/2}(u_j)$ & ratio (Thm.~\ref{thm:memory}) & $\kappa^F_{1/2}(u_j)$ & ratio (Thm.~\ref{thm:memory}) \\
\midrule
0 & 0.5689 & 3.563595 & 8.3740 & 1.0000 (1.0000) & 39.3555 & 1.0000 (1.0000) \\
1 & 1.7068 & 3.563595 & 4.8347 & 0.5774 (0.5774) & 13.1185 & 0.3333 (0.3333) \\
2 & 2.8447 & 3.563595 & 3.7450 & 0.4472 (0.4472) & 7.8711 & 0.2000 (0.2000) \\
3 & 3.9825 & 3.563595 & 3.1651 & 0.3780 (0.3780) & 5.6222 & 0.1429 (0.1429) \\
4 & 5.1204 & 3.563595 & 2.7913 & 0.3333 (0.3333) & 4.3728 & 0.1111 (0.1111) \\
5 & 6.2583 & 3.563595 & 2.5249 & 0.3015 (0.3015) & 3.5778 & 0.0909 (0.0909) \\
\bottomrule
\end{tabular}
\end{table}

\begin{figure}[H]\centering
\includegraphics[width=\textwidth]{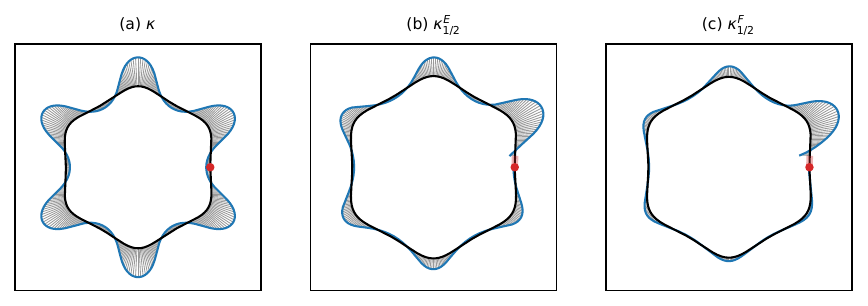}
\caption{Curvature combs of $(6;6,4,4)$ with base point $\varphi_0=0$ (red dot): teeth along the normal, of length proportional to (a) $\kappa$, (b) $\kappa^E_{1/2}$, (c) $\kappa^F_{1/2}$, pointing outwards where the value is positive; each panel has its own scale. The Euclidean comb has the six-fold symmetry of the curve, whereas the fractional combs decrease from lobe to lobe (Theorem~\ref{thm:memory}, Table~\ref{tab:peaks}). On the arc $u<\ell/8$ next to the base point (light red), where $\kappa_\alpha\to-\infty$ (Remark~\ref{rem:basepeak}), no teeth are drawn; the envelope starts inside the curve because $\kappa_\alpha<0$ there.}\label{fig:combs}
\end{figure}

\begin{theorem}\label{thm:cesaro}
(a) The fractional curvature of a circle of radius $R$ is $\kappa_\alpha(u)=1/(R\,w(u))$, which is not constant for $\alpha<1$.
(b) A planar $C^2$ curve has constant fractional curvature $\kappa_\alpha\equiv\kappa_0>0$ if and only if its tangent angle is $\vartheta(u)=\vartheta(0)+\kappa_0P(u)$, i.e.
\[
\text{E:}\ \ \vartheta(u)=\vartheta(0)+\frac{\kappa_0\,\alpha}{\Gamma(3-\alpha)}\,u^{2-\alpha},\qquad
\text{F:}\ \ \vartheta(u)=\vartheta(0)+\kappa_0\,s_F(u)=\vartheta(0)+\kappa_0\Bigl(\frac{\alpha^2u}{\Gamma(2-\alpha)}\Bigr)^{1/\alpha},
\]
and such a curve is unique up to rigid motion. For $\alpha=1$ both are circles. In convention F with $\alpha=1/2$, $\kappa(u)=\kappa_0u/(2\pi)$: the curve is exactly the clothoid (Euler--Cornu spiral). In convention E, with $\kappa_0=c/\alpha$ and $\alpha\to0$, $\vartheta\to cu^2/2$, so the clothoid appears only as a formal limit.
\end{theorem}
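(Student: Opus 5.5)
The argument rests on the scaling relation \eqref{eq:scaling} together with Lemma~\ref{lem:frenet}, which express everything through the weight $w(u)=Cu^p$ and its primitive $P(u)=Cu^{p+1}/(p+1)$.

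For (a), a circle of radius $R$ has $\kappa\equiv1/R$. Then \eqref{eq:scaling} gives $\kappa_\alpha=1/(R\,w(u))$. For $\alpha<1$ the exponent $p$ is positive in both conventions, so $w$ is strictly increasing and $\kappa_\alpha$ is not constant.

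For (b), I will work with the signed planar curvature $\kappa=d\vartheta/du$. The condition $\kappa_\alpha\equiv\kappa_0$ is equivalent to $\kappa(u)=\kappa_0 w(u)$ for $u>0$. Since $w=dP/du$, this says $d\vartheta/du=\kappa_0\,dP/du$. Integrating from $0$, using $P(0)=0$ and the continuity of $\vartheta$ at $0$ (the curve is $C^1$ there), gives $\vartheta(u)=\vartheta(0)+\kappa_0P(u)$. The converse follows by differentiating. The explicit formulas come from inserting $P_E(u)=\alpha u^{2-\alpha}/\Gamma(3-\alpha)$ and $P_F=s_F$ from \eqref{eq:sF}.

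Uniqueness up to rigid motion is the standard argument. The curve is $x(u)=x(0)+\int_0^u(\cos\vartheta,\sin\vartheta)\,dv$, so the free data $x(0)$ and $\vartheta(0)$ amount to a translation and a rotation. The only point needing care is $C^2$ regularity at the base point. There $\kappa=\kappa_0w$ is continuous, and it vanishes at $u=0$ for $\alpha<1$. So the Ces\`aro-type reconstruction is legitimate, and the curve is $C^2$ up to and including $u=0$.

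The special cases are evaluations.
\begin{itemize}
\item For $\alpha=1$ we have $w\equiv1$, so $\kappa\equiv\kappa_0$ and the curve is a circle.
\item In convention F with $\alpha=1/2$, Table~\ref{tab:conv} gives $p_F=1$ and $C_F=(1/2)^{3}\Gamma(3/2)^{-2}=\tfrac18\cdot\tfrac4\pi=1/(2\pi)$. Hence $\kappa=\kappa_0u/(2\pi)$ is linear in arc length, which characterizes the clothoid.
\item In convention E, $\kappa_0P_E(u)=\kappa_0\alpha u^{2-\alpha}/\Gamma(3-\alpha)$. With $\kappa_0=c/\alpha$ this becomes $cu^{2-\alpha}/\Gamma(3-\alpha)$, which tends to $cu^2/\Gamma(3)=cu^2/2$ as $\alpha\to0$. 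I will remark that $\alpha=0$ lies outside the admissible range $0<\alpha<1$ and that $\kappa_0\to\infty$, which justifies calling the clothoid only a formal limit.
\end{itemize}

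The step I expect to need the most care is the behaviour at $u=0$, where $w$ vanishes and Definition~\ref{def:fraccurv} is not a priori defined. I will interpret ``$\kappa_\alpha\equiv\kappa_0$'' as holding for $u>0$ and recover $\vartheta(0)$ by continuity.
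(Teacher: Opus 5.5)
Your proposal is correct and follows the paper's proof: (a) comes from \eqref{eq:scaling} with $\kappa=1/R$, and (b) from integrating $d\vartheta/du=\kappa_0w=\kappa_0\,dP/du$ with $P(0)=0$, with uniqueness because a planar curve is determined by its curvature up to rigid motion, and the special cases from $C_F=\frac18\Gamma(\frac32)^{-2}=1/(2\pi)$, $p_F=1$ and $\Gamma(3)=2$. Your extra care at $u=0$ (reading $\kappa_\alpha\equiv\kappa_0$ for $u>0$, recovering $\vartheta(0)$ by continuity, and noting that $\kappa=\kappa_0w$ is continuous there) is a harmless refinement that the paper leaves implicit.
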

\begin{proof}
(a) is \eqref{eq:scaling} with $\kappa=1/R$. (b) $\kappa_\alpha\equiv\kappa_0$ if and only if $\vartheta'=\kappa=\kappa_0w$, i.e.\ $\vartheta=\vartheta(0)+\kappa_0P$; use $\Gamma(3-\alpha)=(2-\alpha)\Gamma(2-\alpha)$, $P_F=s_F$, and $C_F=\frac18\,\Gamma(\frac32)^{-2}=\frac{1}{2\pi}$, $p_F=1$ for $\alpha=\frac12$. A planar curve is determined by $\kappa(u)$ up to rigid motion.
\qed
\end{proof}
In convention F the condition $\kappa_\alpha\equiv\kappa_0$ means that the tangent angle is linear in the fractional arc length, $\vartheta=\vartheta(0)+\kappa_0s_F$ --- the exact analogue of $\vartheta=\vartheta(0)+\kappa_0u$ for circles. Figure~\ref{fig:cesaro} shows these ``fractional circles''; for $\alpha<1$, $\kappa=\kappa_0w(u)$ increases with $u$, so they are spirals that wind inwards.

\begin{figure}[H]\centering
\includegraphics[width=\textwidth]{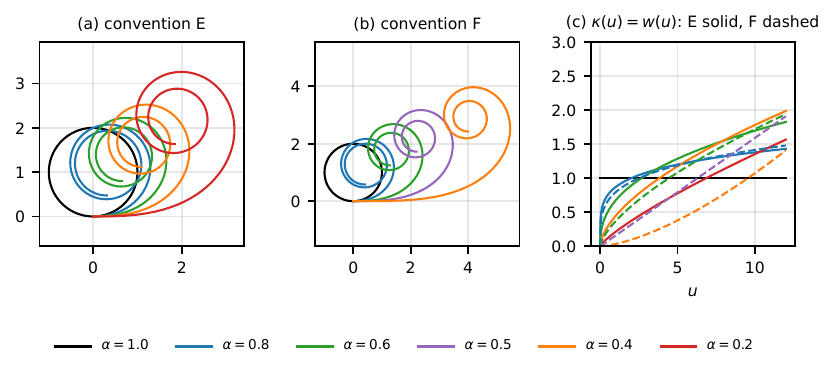}
\caption{Curves with $\kappa_\alpha\equiv1$, drawn up to total turning $4\pi$: convention E (left) and convention F (middle; $\alpha=1/2$ is the clothoid). Right: the corresponding Euclidean curvature $\kappa=w(u)$. Convention E is shown for $\alpha=1$, $0.8$, $0.6$, $0.4$, $0.2$ and convention F for $\alpha=1$, $0.8$, $0.6$, $0.5$, $0.4$; for $\alpha=0.2$ the curve of convention F is not shown, because its tangent angle grows like $u^5$ and reaches $4\pi$ only far outside the plotted region.}\label{fig:cesaro}
\end{figure}

\section{Fractional total turning for cornered curves}\label{sec:turning}
Let $\mathcal C$ be a closed, simple, positively oriented curve that is piecewise $C^2$ with finitely many corners, and let $\vartheta(u)$ be its right-continuous tangent angle. Then $\vartheta$ has bounded variation on $[0,L]$ and its turning measure is $d\vartheta=\kappa\,du+\sum_i\eta_i\,\delta_{u_i}$, with exterior angles $\eta_i\in(-\pi,\pi)$ at the corners $u_i$ \cite{Sullivan2008}.

\begin{definition}[Fractional total turning]\label{def:theta}
\[
\Theta_\alpha=\int_{(0,L]}\frac{d\vartheta(u)}{w(u)},
\]
whenever the integral converges (possibly improperly at $u=0$). For $C^2$ curves, $\Theta_\alpha=\int_0^L\kappa_\alpha\,du$.
\end{definition}

\begin{remark}\label{rem:dP}
Since $\kappa_\alpha=d\vartheta/dP$ (Lemma~\ref{lem:frenet}), $\int_0^{P(L)}\kappa_\alpha\,dP=\int_0^L\kappa\,du=2\pi$ for every simple closed $C^2$ curve and every $\alpha$, a trivial fractional turning-tangent theorem. $\Theta_\alpha$ instead weights the turning by the memory factor $1/w$, which emphasizes turning near the base point.
\end{remark}

\begin{proposition}\label{prop:theta}
Let $\mathcal C$ be as above, without corner at the base point, and let $\kappa$ be bounded on some interval $(0,\epsilon]$.
(a) If $p<1$, $\Theta_\alpha$ is finite; this holds for every $\alpha\in(0,1]$ in convention E and for $\alpha\in(1/2,1]$ in convention F.
(b) If $\kappa$ is continuous at $0$ with $\kappa(0)\neq0$ and $p\ge1$ (convention F with $\alpha\le1/2$), then $\Theta_\alpha$ diverges to $\operatorname{sgn}(\kappa(0))\,\infty$.
(c) If $\kappa\equiv0$ on $(0,\epsilon)$ (base point inside a straight edge), $\Theta_\alpha$ is finite for every $p\ge0$.
(d) For $\alpha=1$, $\Theta_1=2\pi$.
(e) If $|\kappa(u)|\le Mu^{q}$ on $(0,\epsilon]$ for some $M>0$ and $q>p-1$, then $\Theta_\alpha$ is finite; (a) is the case $q=0$. In particular, if $\kappa$ is $C^2$ near the base point, $\kappa(0)=0$, and $\mathcal C$ is symmetric about the normal line at the base point (so that $\kappa$ is even in $u$), one can take $q=2$, so that $\Theta_\alpha$ is finite for $p<3$, i.e.\ for every $\alpha$ in convention E and for $\alpha>1/4$ in convention F.
\end{proposition}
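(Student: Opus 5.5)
The strategy is to split $\Theta_\alpha$ at a small $\epsilon>0$ and handle the two pieces separately. The whole question of finiteness lives near the base point.

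\emph{The piece away from the base point.} On $[\epsilon,L]$ the weight satisfies $w\ge C\epsilon^p>0$ and is continuous. The turning measure $d\vartheta=\kappa\,du+\sum_i\eta_i\delta_{u_i}$ has finite total variation: $\kappa$ is bounded on each closed $C^2$ piece, and there are finitely many corners with $|\eta_i|<\pi$. Hence $\int_{[\epsilon,L]}d\vartheta/w$ is finite, bounded by $\mathrm{Var}(\vartheta)/(C\epsilon^p)$. Since there is no corner at the base point, and we may shrink $\epsilon$ so that $(0,\epsilon)$ contains no corner, on $(0,\epsilon)$ the measure is just $\kappa\,du$. So everything reduces to the improper integral $\int_0^\epsilon \kappa(u)\,u^{-p}\,du/C$.

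\emph{Parts (a), (c) and (e).} For (e), bound the integrand by $(M/C)u^{q-p}$. This is integrable at $0$ exactly when $q-p>-1$, and absolute convergence gives finiteness. Part (a) is the case $q=0$ with $M=\sup|\kappa|$ on $(0,\epsilon]$, which needs $p<1$. In convention E, $p=1-\alpha<1$ for every $\alpha\in(0,1]$. In convention F, $p=(1-\alpha)/\alpha<1$ is equivalent to $\alpha>1/2$, and $\alpha=1$ gives $p=0$. Part (c) is immediate, since the integrand vanishes on $(0,\epsilon)$.

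For the symmetric case in (e), evenness of $\kappa$ in $u$ together with $\kappa\in C^2$ gives $\kappa'(0)=0$. With $\kappa(0)=0$, Taylor's theorem yields $|\kappa(u)|\le \tfrac12\sup|\kappa''|\,u^2$, so $q=2$. The condition becomes $p<3$. In convention E this always holds, since $p\le 1$. In convention F, $(1-\alpha)/\alpha<3$ is equivalent to $\alpha>1/4$.

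\emph{Part (b).} This is the only divergence statement. Choose $\epsilon$ so small that $\kappa$ has the sign of $\kappa(0)$ and $|\kappa|\ge|\kappa(0)|/2$ on $(0,\epsilon)$. Then the integrand $\kappa u^{-p}/C$ has constant sign and $|\kappa u^{-p}|\ge |\kappa(0)|u^{-p}/2$, which is non-integrable at $0$ when $p\ge1$. By monotone convergence, the improper integral tends to $\operatorname{sgn}(\kappa(0))\infty$. Adding the finite remainder over $[\epsilon,L]$ does not change this. Finally, $p_F\ge1$ is equivalent to $\alpha\le1/2$.

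\emph{Part (d).} For $\alpha=1$ we have $w\equiv1$, so $\Theta_1$ is the total mass of the turning measure. For a closed simple positively oriented piecewise $C^2$ curve this equals $2\pi$ by the turning tangent theorem with exterior angles \cite{Sullivan2008}.

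\emph{Main obstacle.} The only step needing care is the precise meaning of ``finite'' for a signed measure near $0$. I plan to prove absolute convergence in (a), (c) and (e), and to use constant sign near $0$ in (b), so that improper and Lebesgue senses agree.
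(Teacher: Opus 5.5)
Your proposal is correct and follows essentially the same route as the paper's proof. Both split off a corner-free interval $(0,\epsilon]$ and bound the remainder by $\operatorname{TV}(\vartheta)/w(\epsilon)$. Both compare $\kappa\,u^{-p}$ with $u^{q-p}$, or, for (b), with the constant-sign lower bound $\tfrac12|\kappa(0)|\,u^{-p}$. Both use $\kappa'(0)=0$ and Taylor's formula in the symmetric case, and the turning tangent theorem with exterior angles for (d).
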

\begin{proof}
Shrinking $\epsilon$ if necessary, we may assume that no corner lies in $(0,\epsilon]$. On $(\epsilon,L]$ we have $1/w\le1/w(\epsilon)$ and $|d\vartheta|((\epsilon,L])\le\operatorname{TV}(\vartheta)<\infty$, so that part is finite. On $(0,\epsilon]$, $|\int\kappa/w\,du|\le\|\kappa\|_\infty C^{-1}\int_0^\epsilon u^{-p}du=\|\kappa\|_\infty\epsilon^{1-p}/(C(1-p))$ if $p<1$, which gives (a). In (b), $\kappa$ has constant sign and $|\kappa|\ge|\kappa(0)|/2$ on some $(0,\epsilon']$, so the integral over $(0,\epsilon']$ has modulus at least $\frac{|\kappa(0)|}{2C}\int_0^{\epsilon'}u^{-p}du=\infty$, while the rest is finite. In (c) the integrand vanishes on $(0,\epsilon)$. (d) $w\equiv1$ and $\int d\vartheta=2\pi$ by the theorem of turning tangents \cite{Hopf1935,DoCarmo1976}. (e) On $(0,\epsilon]$, $|\int\kappa/w\,du|\le MC^{-1}\int_0^\epsilon u^{q-p}du=M\epsilon^{1+q-p}/(C(1+q-p))<\infty$ because $q-p>-1$. If $\kappa$ is $C^2$ and even with $\kappa(0)=0$, then $\kappa'(0)=0$, and Taylor's formula gives $|\kappa(u)|\le\frac12\max|\kappa''|\,u^2$.
\qed
\end{proof}

\begin{remark}\label{rem:corner}
If the base point is a corner with exterior angle $\eta\neq0$, its contribution must be attributed either to $u=0$, where $1/w(0^+)=\infty$ for $p>0$, or to $u=L$, where it equals $\eta/w(L)$. The choice is arbitrary, so $\Theta_\alpha$ is not well defined and corners are not admissible base points. For the cornered test curves $(4;1,1,1)$ and $(4;2,1,1)$ the point $\varphi=0$ is a vertex, and we use the edge midpoint $\varphi_0=\pi/4$.
\end{remark}

\begin{proposition}\label{prop:polygon}
For a regular $N$-gon with edge length $\ell$ and base point at an edge midpoint, the corners lie at $u_j=(2j+1)\ell/2$, $j=0,\dots,N-1$, each with exterior angle $2\pi/N$, and
\begin{align*}
\Theta_\alpha&=\frac{2\pi}{N}\sum_{j=0}^{N-1}\frac{1}{w(u_j)}=\frac{2\pi}{NC}\Bigl(\frac{\ell}{2}\Bigr)^{-p}\sum_{j=0}^{N-1}(2j+1)^{-p},\\
\Theta^E_\alpha&=\frac{2\pi}{N}\,\frac{\Gamma(2-\alpha)}{\alpha}\Bigl(\frac{\ell}{2}\Bigr)^{\alpha-1}\sum_{j=0}^{N-1}(2j+1)^{\alpha-1}\qquad\text{(convention E)}.
\end{align*}
If $p<1$ and the perimeter $L=N\ell$ is fixed, then $\Theta_\alpha\to2\pi L^{-p}/(C(1-p))$ as $N\to\infty$, which is the value for the circle of perimeter $L$, with error $O(N^{-(1-p)})$; in convention E the error is $O(N^{-\alpha})$.
\end{proposition}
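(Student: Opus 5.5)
The plan is to reduce everything to Definition~\ref{def:theta} with a purely atomic turning measure. Parametrize the regular $N$-gon by Euclidean arc length $u$ starting at an edge midpoint, so that $u\in(0,L]$ with $L=N\ell$. The first corner is then at $u_0=\ell/2$, and consecutive corners are $\ell$ apart, so $u_j=(2j+1)\ell/2$ for $j=0,\dots,N-1$. Since the edges are straight, $\kappa\equiv0$ off the corners. The turning measure is therefore $d\vartheta=\sum_j\eta_j\delta_{u_j}$. For a convex positively oriented polygon all exterior angles are equal and sum to $2\pi$ by the turning-tangent theorem, so $\eta_j=2\pi/N$. All $u_j$ lie in $(0,L)$, and no mass sits near $u=0$ (this is Proposition~\ref{prop:theta}(c)). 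Hence the integral is a finite sum, $\Theta_\alpha=\frac{2\pi}{N}\sum_j w(u_j)^{-1}$.

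Inserting $w(u)=Cu^p$ gives the first closed form directly, since $w(u_j)^{-1}=C^{-1}(\ell/2)^{-p}(2j+1)^{-p}$. For convention E, substitute $C_E=\alpha/\Gamma(2-\alpha)$ and $p_E=1-\alpha$.

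For the limit I fix $L$ and write $\ell/2=L/(2N)$. Then
\[
\Theta_\alpha=\frac{2\pi}{C}\,L^{-p}\cdot\frac1N\sum_{j=0}^{N-1}\Bigl(\frac{2j+1}{2N}\Bigr)^{-p}.
\]
The sum is the midpoint Riemann sum for $\int_0^1 x^{-p}\,dx=1/(1-p)$ with nodes $x_j=(2j+1)/(2N)$. For comparison, the circle of perimeter $L$ has $\kappa=2\pi/L$, so $\Theta_\alpha=\frac{2\pi}{L}\int_0^L\frac{du}{Cu^p}=\frac{2\pi L^{-p}}{C(1-p)}$. This matches the claimed limit, and the limit itself follows once the Riemann sum is shown to converge despite the integrable singularity at $0$.

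The main obstacle is the error rate, because the integrand $x^{-p}$ is singular at $0$ and the usual $O(N^{-2})$ midpoint bound does not apply. I would split off the first cell $[0,1/N]$. There the quadrature term is $\frac1N(2N)^{p}=O(N^{p-1})$, and the exact integral is $N^{p-1}/(1-p)=O(N^{p-1})$. On each later cell $[j/N,(j+1)/N]$ with $j\ge1$, the midpoint rule error is bounded by $\frac{1}{24N^3}\max|f''|\lesssim N^{-3}(j/N)^{-p-2}=N^{p-1}j^{-p-2}$. This is summable in $j$, so the total from these cells is $O(N^{p-1})$. (Convexity gives the sign, though it is not needed.) Altogether the error is $O(N^{-(1-p)})$, and for convention E, where $1-p=\alpha$, this is $O(N^{-\alpha})$. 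A sharper route, if wanted, would be to identify the sum through the Hurwitz zeta function, $\sum_{j<N}(j+\tfrac12)^{-p}=\zeta(p,\tfrac12)-\zeta(p,N+\tfrac12)$, together with its asymptotic expansion. This shows that the leading error term is $2^{p}\zeta(p,\tfrac12)N^{p-1}/N\cdot N$-scaled, of exact order $N^{-(1-p)}$.
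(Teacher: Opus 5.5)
Your proof is correct. The closed forms are obtained as in the paper: a purely atomic turning measure with masses $2\pi/N$ at $u_j=(2j+1)\ell/2$, then $w=Cu^p$.

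The asymptotic part takes a genuinely different route.

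\textbf{The paper's route.} It uses only that $f(x)=(2x+1)^{-p}$ is decreasing. The integral test $\int_0^N f\le\sum_{j<N}f(j)\le f(0)+\int_0^{N-1}f$ gives $\sum_{j<N}(2j+1)^{-p}=(2N)^{1-p}/(2(1-p))+O(1)$. The prefactor $\frac{2\pi}{NC}(\ell/2)^{-p}\propto N^{p-1}$ then turns the $O(1)$ into $O(N^{p-1})$.

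\textbf{Your route.} You rescale to $[0,1]$ and observe that the polygon value is exactly the midpoint rule for the circle's value $\frac{2\pi L^{-p}}{C}\int_0^1x^{-p}\,dx$. You then control the error by isolating the singular cell $[0,1/N]$ and applying the bound $h^3\max|f''|/24$ on the remaining cells, where $\sum_{j\ge1}j^{-p-2}<\infty$.

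\textbf{What each buys.} Your argument needs a second-derivative estimate and is longer than the paper's monotonicity argument. In return it explains why the limit is the circle's value: the polygon is a quadrature of the circle. It also shows that the error is concentrated in the cells near the base point. Moreover, $x^{-p}$ is strictly convex for $p>0$, so Jensen's inequality on every cell, the singular one included, shows that the polygon value lies strictly below the circle's. This matches Table~\ref{tab:polygon}.

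\textbf{Your closing remark.} The Hurwitz-zeta sentence is garbled as written. The correct expansion is $\frac1N\sum_{j<N}x_j^{-p}=\frac1{1-p}+\zeta(p,\tfrac12)\,N^{p-1}+O(N^{-2})$, with no factor $2^p$; multiply by $2\pi L^{-p}/C$ for $\Theta_\alpha$ itself. Since $\zeta(p,\tfrac12)=(2^p-1)\zeta(p)<0$ for $0<p<1$, this shows the rate $N^{-(1-p)}$ is sharp and confirms the sign. The paper's $O$-bound alone does not give either fact. For example, for $\alpha=\tfrac12$ in convention E it reproduces the tabulated values $7.9546$ and $8.6571$ at $N=4$ and $N=16$.
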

\begin{proof}
The formula follows from Definition~\ref{def:theta}. Since $f(x)=(2x+1)^{-p}$ is decreasing, $\int_0^Nf\le\sum_{j<N}f(j)\le f(0)+\int_0^{N-1}f$, whence $\sum_{j<N}(2j+1)^{-p}=(2N)^{1-p}/(2(1-p))+O(1)$ for $p<1$. With $\ell=L/N$ this gives $\Theta_\alpha=2\pi L^{-p}/(C(1-p))+O(N^{p-1})$. For the circle, $\Theta_\alpha=\frac{1}{RC}\int_0^Lu^{-p}du=2\pi L^{-p}/(C(1-p))$ with $R=L/2\pi$.
\qed
\end{proof}
The rhombus $|x|+|y|=1$ is the Gielis curve $(4;1,1,1)$ ($N=4$, $\ell=\sqrt2$, $L=4\sqrt2$). In convention E, $\Theta^E_{1/2}=7.9546$, against $9.3648$ for the circle of equal perimeter; in convention F, $\Theta^F_{3/4}=6.9971$ against $7.4944$. Table~\ref{tab:polygon} shows the slow convergence of polygons to the circle (rates fitted for $N=2^{10},2^{12},2^{14},2^{16}$ agree with $N^{-(1-p)}$ to three digits; see also Figure~\ref{fig:polygons}), and Tables~\ref{tab:thetaE}--\ref{tab:thetaF} and Figure~\ref{fig:theta} compare $\Theta_\alpha$ across curves rescaled to the same perimeter, using $\Theta_\alpha\mapsto c^{-p}\Theta_\alpha$ under $x\mapsto cx$.

\begin{table}[H]\centering
\caption{$\Theta_\alpha$ of regular $N$-gons with perimeter $4\sqrt2$ (base point at an edge midpoint) and of the circle with the same perimeter (Proposition~\ref{prop:polygon}).}\label{tab:polygon}
\begin{tabular}{crrrrrr}
\toprule
conv. & $\alpha$ & $N=4$ & $N=16$ & $N=256$ & $N=4096$ & circle \\
\midrule
E & 0.3 & 12.3928 & 14.5869 & 16.9994 & 18.0497 & 18.8593 \\
E & 0.5 & 7.9546 & 8.6571 & 9.1878 & 9.3205 & 9.3648 \\
E & 0.8 & 6.1930 & 6.3135 & 6.3673 & 6.3732 & 6.3739 \\
\midrule
F & 0.6 & 11.1925 & 12.9735 & 14.8106 & 15.5398 & 16.0196 \\
F & 0.75 & 6.9971 & 7.2956 & 7.4631 & 7.4895 & 7.4944 \\
F & 0.9 & 6.1958 & 6.2511 & 6.2724 & 6.2742 & 6.2744 \\
\bottomrule
\end{tabular}
\end{table}

\begin{figure}[H]\centering
\includegraphics[width=\textwidth]{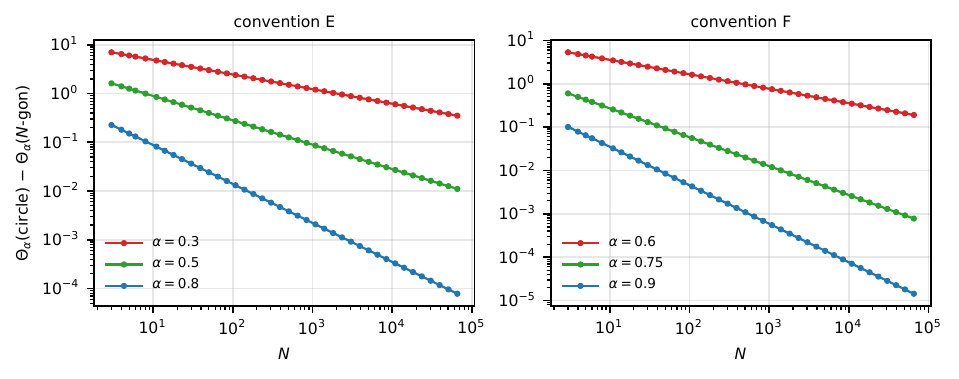}
\caption{Proposition~\ref{prop:polygon}: difference between $\Theta_\alpha$ of the circle and of the regular $N$-gon with the same perimeter $4\sqrt2$ (base point at an edge midpoint), convention E (left) and F (right). Dashed: slope $-(1-p)$, i.e.\ $N^{-\alpha}$ in convention E and $N^{-(2\alpha-1)/\alpha}$ in convention F.}\label{fig:polygons}
\end{figure}

\begin{table}[H]\centering
\caption{Fractional total turning in convention E for curves of common perimeter $L=4\sqrt2$. Base points: $\varphi_0=0$ for the smooth Gielis curves (a flat point of the squircle, $\kappa=0$; concave points of $(6;6,4,4)$ and $(5;7,7,7)$, with $\kappa=-1/2$ and $-9/16$ for $a=b=1$, i.e.\ $-0.603$ and $-0.760$ after rescaling to $L=4\sqrt2$), and the edge midpoint $\varphi_0=\pi/4$ for the cornered curves $(4;1,1,1)$ and $(4;2,1,1)$ (Remark~\ref{rem:corner}), with $\kappa(\varphi_0)=0$ and $2^{-3/4}$ for $a=b=1$, i.e.\ $0$ and $0.608$ after rescaling. All values equal $2\pi$ at $\alpha=1$.}\label{tab:thetaE}
\begin{tabular}{lrrrrr}
\toprule
curve (convention E) & $\alpha=0.2$ & $\alpha=0.4$ & $\alpha=0.6$ & $\alpha=0.8$ & $\alpha=1.0$ \\
\midrule
circle & 36.575 & 12.406 & 7.743 & 6.374 & 6.283 \\
rhombus $(4;1,1,1)$ & 18.359 & 9.543 & 7.014 & 6.193 & 6.283 \\
$(4;2,1,1)$ & 28.172 & 11.065 & 7.398 & 6.287 & 6.283 \\
squircle $(4;4,4,4)$ & 19.340 & 9.824 & 7.112 & 6.223 & 6.283 \\
$(6;6,4,4)$ & 12.020 & 9.009 & 6.977 & 6.204 & 6.283 \\
$(5;7,7,7)$ & 8.009 & 8.209 & 6.741 & 6.140 & 6.283 \\
\bottomrule
\end{tabular}
\end{table}

\begin{table}[H]\centering
\caption{As Table~\ref{tab:thetaE}, in convention F, for $\alpha\in(1/2,1]$, where $\Theta^F_\alpha$ is finite for all six curves (Proposition~\ref{prop:theta}).}\label{tab:thetaF}
\begin{tabular}{lrrrrr}
\toprule
curve (convention F) & $\alpha=0.6$ & $\alpha=0.7$ & $\alpha=0.8$ & $\alpha=0.9$ & $\alpha=1.0$ \\
\midrule
circle & 16.020 & 8.695 & 6.823 & 6.274 & 6.283 \\
rhombus $(4;1,1,1)$ & 11.193 & 7.754 & 6.548 & 6.196 & 6.283 \\
$(4;2,1,1)$ & 13.770 & 8.249 & 6.692 & 6.237 & 6.283 \\
squircle $(4;4,4,4)$ & 11.599 & 7.876 & 6.592 & 6.210 & 6.283 \\
$(6;6,4,4)$ & 10.048 & 7.688 & 6.558 & 6.204 & 6.283 \\
$(5;7,7,7)$ & 8.785 & 7.389 & 6.462 & 6.175 & 6.283 \\
\bottomrule
\end{tabular}
\end{table}

\begin{figure}[H]\centering
\includegraphics[width=\textwidth]{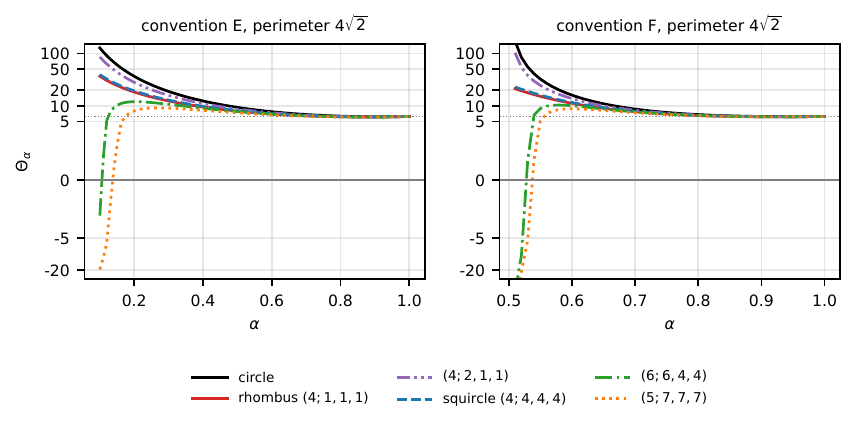}
\caption{$\Theta_\alpha$ versus $\alpha$ at perimeter $4\sqrt2$, convention E (left) and F (right), symmetric-logarithmic scale (linear for $|\Theta_\alpha|\le5$); dotted: $2\pi$. In convention F the circle and $(4;2,1,1)$ diverge to $+\infty$ and $(6;6,4,4)$, $(5;7,7,7)$ diverge to $-\infty$ as $\alpha\downarrow1/2$ (their curvature at the base point does not vanish), whereas the squircle ($\kappa(\varphi_0)=0$, $\kappa=O(u^2)$) and the rhombus (base point on an edge) stay finite, as predicted by Proposition~\ref{prop:theta}(e) and~(c), respectively.}\label{fig:theta}
\end{figure}

The Gielis curve $(4;2,1,1)$ has the same four corners as the rhombus ($n_2=n_3=1$, Lemma~\ref{lem:regular}), but curved edges: for $0\le\varphi\le\pi/2$, $r(\varphi)=2^{-1/4}\cos(\varphi-\pi/4)^{-1/2}$, so that $r'/r=\frac12\tan(\varphi-\pi/4)$. Since the tangent makes the angle $\operatorname{arccot}(r'/r)\in(0,\pi)$ with the radial direction, the exterior angles are $\eta=\pi-2\arctan2=\arctan\frac43$, and each edge turns by $\frac\pi2-\eta=\arctan\frac34$. At the base point $\varphi_0=\pi/4$ (an edge midpoint, Remark~\ref{rem:corner}) the curvature is $\kappa(\varphi_0)=2^{-3/4}>0$. Hence, unlike for the rhombus (Proposition~\ref{prop:theta}(c)), $\Theta^F_\alpha$ of $(4;2,1,1)$ is finite for $\alpha>1/2$ and diverges for $\alpha\le1/2$ (Proposition~\ref{prop:theta}(a), (b)). Its values are computed by quadrature of $\kappa/w$ on the four edges (five pieces between the base point and the corners), plus the corner terms $\eta/w(u_j)$ (Section~\ref{sec:numerics}). Figure~\ref{fig:turning} shows the turning measure and the partial fractional turning up to each arc length for the rhombus, $(4;2,1,1)$, the circle, the squircle and $(6;6,4,4)$.

\begin{figure}[H]\centering
\includegraphics[width=\textwidth]{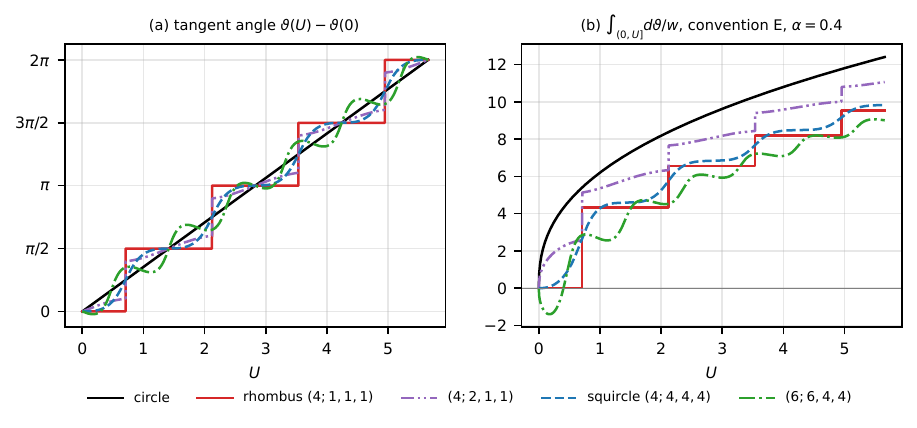}
\caption{Turning measure and fractional total turning (Definition~\ref{def:theta}, Proposition~\ref{prop:theta}) for curves of perimeter $4\sqrt2$, with the base points of Table~\ref{tab:thetaE}. (a) The tangent angle $\vartheta(U)-\vartheta(0)$ against the arc length $U$; corners are jumps ($\pi/2$ for the rhombus, $\arctan(4/3)$ for $(4;2,1,1)$). (b) The partial fractional turning $\int_{(0,U]}d\vartheta/w$ in convention E for $\alpha=0.4$; at $U=4\sqrt2$ it equals the entries of Table~\ref{tab:thetaE}. Near the base point the weight $1/w\propto u^{\alpha-1}$ is large: for $(6;6,4,4)$, whose base point is concave, the partial turning is negative at first.}\label{fig:turning}
\end{figure}

The fractional total turning $\Theta_\alpha$ need not be positive: it is dominated by the turning near the base point when $\alpha$ is small, and for $(6;6,4,4)$ and $(5;7,7,7)$, whose base points lie in concave parts of the curve, $\Theta^E_\alpha<0$ for $\alpha<0.106$ and $\alpha<0.137$, respectively, and $\Theta^F_\alpha<0$ for $1/2<\alpha<0.528$ and $1/2<\alpha<0.537$; for $\alpha\le1/2$, $\Theta^F_\alpha$ diverges to $-\infty$ (Proposition~\ref{prop:theta}(b)). In every case we computed, the circle has the largest $\Theta_\alpha$ for $\alpha<1$, in both conventions, and the cornered curve $(4;2,1,1)$ the second largest, above the smooth squircle, whose curvature vanishes at the base point. We do not claim this in general: $\Theta_\alpha$ depends on the base point, and a proof or counterexample would be an isoperimetric-type result for $\Theta_\alpha$, which we state as an open problem.

\section{Fractional helices over Gielis bases}\label{sec:helix}
Let $\mathcal B$ be a convex planar Gielis curve with arc length $\varsigma$ and curvature $\kappa_{\mathcal B}(\varsigma)\ge0$ (for example the squircle; Remark~\ref{rem:convexbase} gives convex Gielis curves for every $m$). For $\beta\in(0,\pi/2)$ the cylindrical helix $\gamma(u)=\bigl(\mathcal B(u\cos\beta),\,u\sin\beta\bigr)$ has arc length $u$ and, where $\kappa_{\mathcal B}>0$,
\[
\kappa=\kappa_{\mathcal B}(u\cos\beta)\cos^2\beta,\qquad \tau=\kappa_{\mathcal B}(u\cos\beta)\sin\beta\cos\beta,\qquad \tau/\kappa=\tan\beta .
\]

\begin{proposition}\label{prop:helix}
Let $w$ be either weight, $P=\int w$, $\kappa_\alpha=\kappa/w$, $\tau_\alpha=\tau/w$, and $'=d/dP$ as in Lemma~\ref{lem:frenet}.
(a) $\tau_\alpha/\kappa_\alpha=\tau/\kappa$. Hence the fractional general-helix condition $\kappa_\alpha/\tau_\alpha=\mathrm{const}$ \cite[Theorem~3.2]{HasYilmaz2023} characterizes exactly the classical general helices (Lancret's theorem \cite{Struik1988}).
(b) The fractional Izumiya--Takeuchi function satisfies
\[
\sigma_\alpha:=\frac{\kappa_\alpha^2}{(\kappa_\alpha^2+\tau_\alpha^2)^{3/2}}\Bigl(\frac{\tau_\alpha}{\kappa_\alpha}\Bigr)'
=\sigma:=\frac{\kappa^2}{(\kappa^2+\tau^2)^{3/2}}\,\frac{d}{du}\Bigl(\frac{\tau}{\kappa}\Bigr).
\]
Hence the fractional slant helices ($\sigma_\alpha$ constant) are exactly the classical slant helices of Izumiya and Takeuchi \cite{Izumiya2004}.
(c) If the derivative in (b) is taken with respect to $u$ while $\kappa_\alpha,\tau_\alpha$ are retained, one obtains $w\,\sigma$, whose constancy is not equivalent to that of $\sigma$ when $\alpha<1$.
\end{proposition}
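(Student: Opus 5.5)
The whole proof rests on the scaling relations \eqref{eq:scaling}, $\kappa_\alpha=\kappa/w$ and $\tau_\alpha=\tau/w$, and on the chain rule $d/dP=w^{-1}\,d/du$ from Lemma~\ref{lem:frenet}. Throughout we work on an interval where $\kappa>0$ and $w>0$, i.e.\ $u>0$, away from the base point, so that every quotient is defined.

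For part (a), the common factor $1/w$ cancels: $\tau_\alpha/\kappa_\alpha=(\tau/w)/(\kappa/w)=\tau/\kappa$. The fractional condition of \cite[Theorem~3.2]{HasYilmaz2023} is therefore literally the classical condition that $\tau/\kappa$ be constant. By Lancret's theorem this characterizes the general helices.

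For part (b), we first compute the prefactor by homogeneity. Since $\kappa_\alpha^2=\kappa^2/w^2$ and $(\kappa_\alpha^2+\tau_\alpha^2)^{3/2}=(\kappa^2+\tau^2)^{3/2}/w^3$, the prefactor equals
\[
w\,\frac{\kappa^2}{(\kappa^2+\tau^2)^{3/2}} .
\]
This shows it is homogeneous of degree $-1$ in $(\kappa,\tau)$. Next, part (a) and the chain rule give
\[
\Bigl(\frac{\tau_\alpha}{\kappa_\alpha}\Bigr)'=w^{-1}\,\frac{d}{du}\Bigl(\frac{\tau}{\kappa}\Bigr).
\]
The factors $w$ and $w^{-1}$ cancel, so $\sigma_\alpha=\sigma$ pointwise. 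Constancy of $\sigma_\alpha$ in $P$ is the same as constancy in $u$, because $P$ is a strictly increasing reparametrization for $u>0$. This yields the identification with the Izumiya--Takeuchi slant helices \cite{Izumiya2004}.

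For part (c), we repeat the computation with $d/du$ in place of $d/dP$. The prefactor still contributes the factor $w$, but no compensating $w^{-1}$ appears, so the result is $w\,\sigma$. To show that constancy of $w\,\sigma$ is not equivalent to constancy of $\sigma$, it suffices to give one counterexample. We take a classical slant helix with $\sigma\equiv\sigma_0\neq0$; then $w\,\sigma=\sigma_0Cu^p$ is not constant for $p>0$, which holds exactly when $\alpha<1$. Such a slant helix exists that is not a general helix, for instance from \cite{Izumiya2004}. Conversely, a curve with $w\,\sigma$ constant and nonzero has non-constant $\sigma$.

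The only step needing care is the existence claim in (c). The fallback is to prescribe $\kappa>0$ and a $\tau$ with $\sigma\equiv\sigma_0$ by solving the first-order ODE $d(\tau/\kappa)/du=\sigma_0(1+(\tau/\kappa)^2)^{3/2}\kappa$, and then invoke the fundamental theorem of space curves. Everything else is direct algebra, with the standing assumption $\kappa>0$, $u>0$ made explicit.
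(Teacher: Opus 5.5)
Your proposal is correct and is essentially the paper's proof. In (a) the common factor $1/w$ cancels. In (b) the prefactor, homogeneous of degree $-1$, contributes a factor $w$, which cancels against $d/dP=w^{-1}\,d/du$. In (c) the same computation without $w^{-1}$ gives $w\,\sigma$. Your explicit counterexample for the non-equivalence in (c) is valid: a slant helix with $\sigma\equiv\sigma_0\neq0$, built via the ODE for $\tau/\kappa$ and the fundamental theorem of space curves, has $w\,\sigma=\sigma_0Cu^p$. The paper leaves this step implicit.
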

\begin{proof}
(a) The factor $1/w$ cancels. (b) $\tau_\alpha/\kappa_\alpha=\tau/\kappa$, $d/dP=w^{-1}d/du$, and $\kappa_\alpha^2(\kappa_\alpha^2+\tau_\alpha^2)^{-3/2}=w\,\kappa^2(\kappa^2+\tau^2)^{-3/2}$. (c) The same computation without the factor $w^{-1}$.
\qed
\end{proof}

\begin{remark}\label{rem:HY34}
The condition obtained in \cite[Theorem~3.4]{HasYilmaz2023}, $(\tau_\alpha/\kappa_\alpha)'/\kappa_\alpha=\mathrm{const}$, is not the Izumiya--Takeuchi condition, even for $\alpha=1$: its derivation combines $\langle\bn_\alpha,U\rangle=\cos\psi$ with $\langle\bb_\alpha,U\rangle=\sin\psi$ (constant angle $\psi$, fixed unit vector $U$), which forces $\langle\bt_\alpha,U\rangle=0$ because $\langle\bt_\alpha,U\rangle^2+\langle\bn_\alpha,U\rangle^2+\langle\bb_\alpha,U\rangle^2=|U|^2=1$. The standard argument of \cite{Izumiya2004}, applied to \eqref{eq:frenetP}, gives $\sigma_\alpha=\pm\cot\psi$, i.e.\ Proposition~\ref{prop:helix}(b). A non-invariance of slant helices under Caputo scaling can only be obtained with mixed-parameter conditions such as (c); it is an artefact of differentiating with respect to a parameter other than the one in the fractional Frenet equations.
\end{remark}

Figure~\ref{fig:helix} shows a Gielis helix over the squircle: $\kappa_\alpha$ and $\tau_\alpha$ decay along the curve by the common factor $1/w$, while their ratio stays equal to $\tan\beta$.

\begin{figure}[H]\centering
\includegraphics[width=\textwidth]{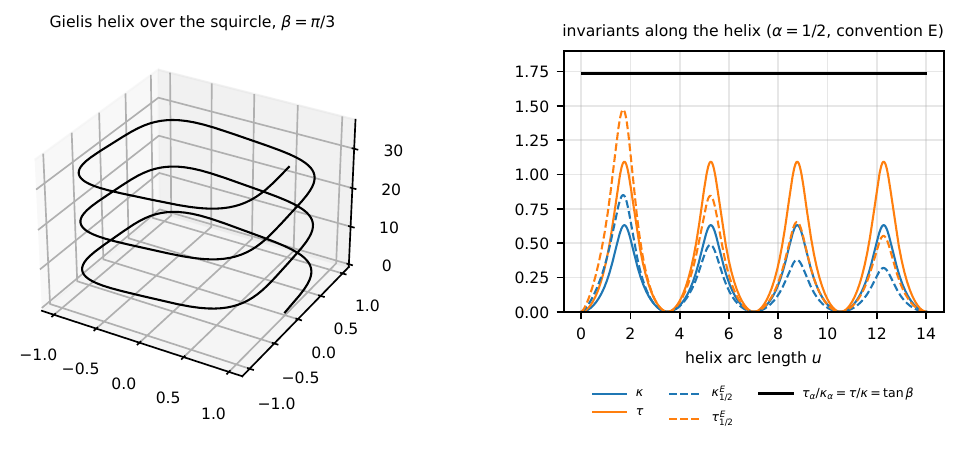}
\caption{Gielis helix over the squircle, $\beta=\pi/3$. Left: three turns; the axes are not to scale (the vertical axis is compressed). Right: $\kappa$, $\tau$ and their convention-E fractional versions for $\alpha=1/2$; the ratio $\tau_\alpha/\kappa_\alpha=\tau/\kappa=\tan\beta$ is constant (Proposition~\ref{prop:helix}(a)); it is undefined at the flat points of the base, where $\kappa=0$.}\label{fig:helix}
\end{figure}

\begin{remark}[Convex Gielis bases]\label{rem:convexbase}
The construction above needs a convex base. A curve $r=1/\rho(\varphi)$ is convex if and only if $\rho+\rho''\ge0$, because the numerator of~\eqref{eq:kappa} is $r^2+2r'^2-rr''=r^3(\rho+\rho'')$. For $(m;n_1,4,4)$ with $a=b=1$ we have $\rho=A^{1/n_1}$, $A=1-\tfrac12\sin^2(m\varphi/2)$, and a direct computation gives
\[
\rho+\rho''=\rho\,\Bigl[1-\frac{m^2}{n_1}\,\frac{3c+1}{(3+c)^2}+\frac{m^2}{16\,n_1^2}\Bigl(\frac{d\ln A}{dx}\Bigr)^2\Bigr],\qquad c=\cos(m\varphi),\quad x=\frac{m\varphi}{4}.
\]
Since $(3c+1)/(3+c)^2$ increases from $-1/2$ to $1/4$ on $[-1,1]$, the curve is convex if and only if $n_1\ge m^2/4$, and for $n_1=m^2/4$ its curvature vanishes exactly at the $m$ points $\varphi=2k\pi/m$. For $m=4$ this is the squircle; Figure~\ref{fig:helix_m} shows the helices over $(5;6.25,4,4)$, $(6;9,4,4)$ and $(7;12.25,4,4)$. The curves $(m;4,4,4)$ with $m\ge5$ are not convex ($\kappa(0)=1-m^2/16<0$), and neither are the test curves $(6;6,4,4)$ and $(5;7,7,7)$.
\end{remark}

\begin{figure}[H]\centering
\includegraphics[width=\textwidth]{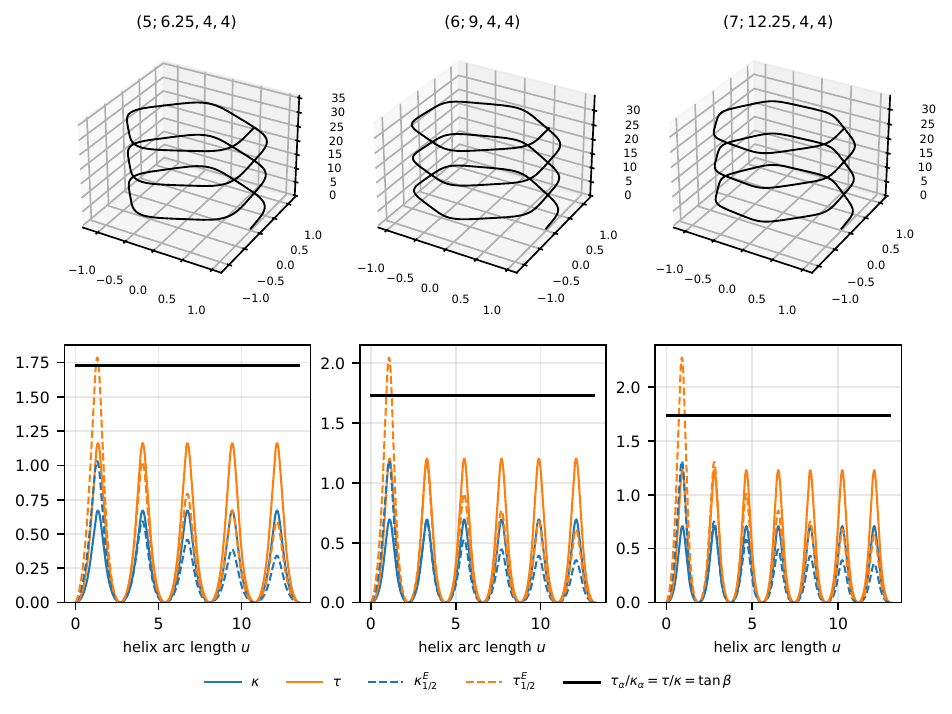}
\caption{Gielis helices with $\beta=\pi/3$ over the convex bases $(m;m^2/4,4,4)$ of Remark~\ref{rem:convexbase} for $m=5$, $6$, $7$ (the case $m=4$, the squircle, is Figure~\ref{fig:helix}). Top: three turns; the vertical axis is compressed. Bottom: $\kappa$, $\tau$ and their convention-E fractional versions for $\alpha=1/2$ along one turn; the ratio $\tau_\alpha/\kappa_\alpha=\tau/\kappa=\tan\beta$ is constant (Proposition~\ref{prop:helix}(a)); it is undefined at the $m$ flat points of the base per turn, where $\kappa=0$.}\label{fig:helix_m}
\end{figure}

\section{Computational protocol}\label{sec:numerics}
The library in \ref{app:code} first checks closure ($r$ must be $2\pi$-periodic, Lemma~\ref{lem:periodic}) and then computes $r,r',r''$ by spectral differentiation on a periodic grid of $N_\varphi=11520$ points, which is legitimate for the smooth test curves ($r\in C^6$, Remark~\ref{rem:n1}) and must not be used for cornered curves. From these it obtains $\kappa$ and $u$ by \eqref{eq:kappa}, and $\kappa_\alpha$ by \eqref{eq:scaling} in both conventions, leaving $\kappa_\alpha$ undefined at $u=0$ for $\alpha<1$. It evaluates $\Theta_\alpha$ through the substitution $W=u^{1-p}$, which removes the endpoint singularity, $\int\kappa\,u^{-p}du=(1-p)^{-1}\int\kappa\,dW$; the case $p\ge1$ is rejected, because there $\Theta_\alpha$ diverges if $\kappa(0)\neq0$ (Proposition~\ref{prop:theta}(b)) and Proposition~\ref{prop:theta}(e) gives only a sufficient condition for finiteness. Finally, it provides closed forms for circles and polygons.
The round-off error of spectral differentiation grows like $N_\varphi^k$ for the $k$-th derivative: for $(6;6,4,4)$ the error of $r''$ is of order $10^{-9}$ at $N_\varphi=4096$ but exceeds $10^{-6}$ at $N_\varphi=262144$, so that very fine grids are not suitable as references. Reference values are therefore computed independently, with a separate code based on the closed-form derivatives of \eqref{eq:gielis}, composite Gauss--Legendre quadrature for $u$, and adaptive quadrature with an algebraic weight at the base point.
Table~\ref{tab:convergence} shows the accuracy of $\Theta_\alpha$ under grid refinement: the relative error decreases like $N_\varphi^{-2}$ and is at most $1.5\times10^{-5}$ for $N_\varphi=4096$. Figure~\ref{fig:accuracy} shows this decay and the growth of the round-off in $r''$. At $N_\varphi=11520$, $\Theta_1$ reproduces $2\pi$ to within a relative error of $10^{-6}$ for the three smooth curves of Tables~\ref{tab:thetaE}--\ref{tab:thetaF} (and $4\times10^{-5}$ for $(8;4,6,6)$), and the circle's closed form to within a relative error of $10^{-9}$; the smooth-curve entries of Tables~\ref{tab:thetaE}--\ref{tab:thetaF} agree with the reference values to the digits shown (deviation below $3\times10^{-5}$).
The cornered curve $(4;2,1,1)$ is not smooth, so the spectral library is not used for it. Its entries in Tables~\ref{tab:thetaE}--\ref{tab:thetaF} and Figure~\ref{fig:theta} are computed with the same reference code, by adaptive quadrature of $\kappa\nu u^{-p}$ on the arcs between the corners (closed-form derivatives on each arc, algebraic weight at the base point), plus the corner terms $\eta_ju_j^{-p}$, with $\eta_j$ obtained from the one-sided tangents. This code reproduces the closed form of Proposition~\ref{prop:polygon} for the rhombus to within $10^{-12}$, yields $\eta_j=\arctan\frac43$ and $\Theta_1=2\pi$ to within $10^{-12}$, and agrees to within $10^{-10}$ with a second, independent implementation (\texttt{mpmath} quadrature, explicit form of $r$, exact $\eta$).
Finally, the quantitative content of the numbered statements of Sections~\ref{sec:prelim}--\ref{sec:helix} and every number quoted in the text are verified numerically, in 53 separate checks.

\begin{table}[H]\centering
\caption{Relative error of $\Theta_\alpha$ for $(6;6,4,4)$ with $a=b=1$ (not rescaled; $L=6.8272$) on grids of $N_\varphi$ points; reference values computed independently (Section~\ref{sec:numerics}); last row: $\alpha=1$ against the exact value $2\pi$.}\label{tab:convergence}
\begin{tabular}{cr rrrr}
\toprule
conv. & $\alpha$ & reference value & $N_\varphi=1024$ & $N_\varphi=4096$ & $N_\varphi=16384$ \\
\midrule
E & 0.2 & 10.341342 & 2.5e-04 & 1.5e-05 & 9.0e-07 \\
E & 0.5 & 7.091430 & 1.7e-05 & 1.0e-06 & 6.3e-08 \\
F & 0.6 & 8.863837 & 6.5e-05 & 3.9e-06 & 2.4e-07 \\
F & 0.8 & 6.256566 & 3.6e-06 & 2.3e-07 & 1.4e-08 \\
-- & 1.0 & $2\pi$ & 9.3e-06 & 5.8e-07 & 3.6e-08 \\
\bottomrule
\end{tabular}
\end{table}

\begin{figure}[H]\centering
\includegraphics[width=\textwidth]{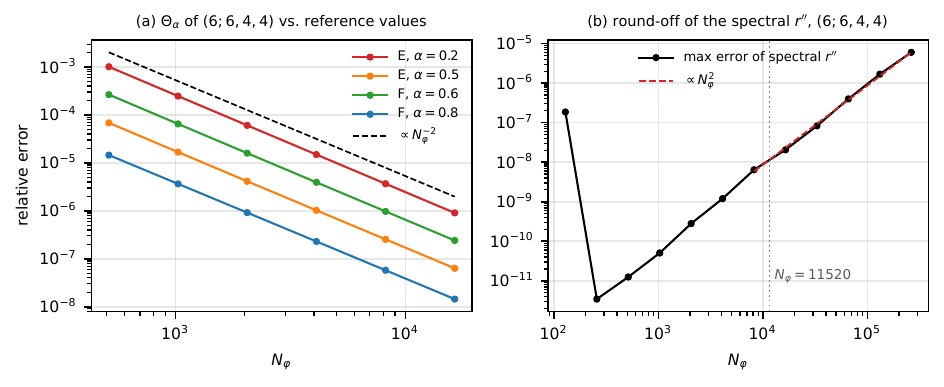}
\caption{Numerical accuracy for $(6;6,4,4)$ (Section~\ref{sec:numerics}). (a) Relative error of $\Theta_\alpha$ against the reference values of Table~\ref{tab:convergence}; dashed: slope $-2$. (b) Maximal error of the spectral second derivative $r''$ against its closed form: it is smallest on a coarse grid (here $N_\varphi=256$) and then grows like $N_\varphi^2$ through round-off (dashed); dotted: $N_\varphi=11520$, the grid used for Tables~\ref{tab:thetaE}--\ref{tab:thetaF}.}\label{fig:accuracy}
\end{figure}

\section{Discussion and conclusions}\label{sec:discussion}
The smoothness of Gielis curves is governed by $n_2,n_3$, not by $n_1$ (Lemma~\ref{lem:regular}). A ``regularizing'' change of metric such as the Gielis chart makes every Gielis curve that closes after one turn (Lemma~\ref{lem:periodic}) a unit circle, but the non-smoothness of $r$ reappears in the regularity of the metric (Proposition~\ref{prop:chart}, Corollary~\ref{cor:transport}); the superelliptic star derivative is not tangent to the transformed curve (Remark~\ref{rem:superelliptic}).

The Caputo scaling exists in two conventions, E and F, and every fractional quantity depends on the convention and on the base point (Lemma~\ref{lem:param}, Remarks~\ref{rem:HY} and~\ref{rem:base}). Under homotheties the fractional curvature has scaling exponent $2-\alpha$ in convention E and $1/\alpha$ in convention F (Theorem~\ref{thm:scaling}), and the Gielis parameters $(a,b)$ act as a homothety only when $n_2=n_3$ (Corollary~\ref{cor:param}). Caputo memory breaks the rotational symmetry (Theorem~\ref{thm:memory}), and the curves of constant fractional curvature have $\vartheta=\vartheta(0)+\kappa_0P(u)$, the clothoid being the case $\alpha=1/2$ of convention F (Theorem~\ref{thm:cesaro}).

The fractional total turning (Definition~\ref{def:theta}, Remark~\ref{rem:dP}) is finite on cornered curves under the conditions of Proposition~\ref{prop:theta}, has closed forms for polygons, converges from polygons to the circle at the rate $N^{-(1-p)}$ (Proposition~\ref{prop:polygon}), and may be negative. General helices and slant helices are Caputo-invariant when derivatives are taken with respect to the fractional parameter (Proposition~\ref{prop:helix}, Remark~\ref{rem:HY34}).

All results of this paper concern the simplified fractional tangent \eqref{eq:yajima} \cite{Yajima2018,Aydin2021,HasYilmaz2023}. The exact Caputo derivative of a curve is in general not tangent to it \cite{RubioLopez2023,RubioLopez2024}: for the unit circle parametrized by arc length, ${}^CD^{1/2}_ux(u)$ makes angles of $9.5^\circ$, $18.9^\circ$ and $36.8^\circ$ with $\bt(u)$ at $u=0.5$, $1$ and $2$. The invariance of the Frenet frame is therefore a property of the simplification, not of the Caputo operator. The results also depend on the base point and on the convention (E or F). The smooth Gielis curves in the numerical results have $n_2,n_3\ge4$; cornered curves are covered by closed forms for polygons, the Gielis rhombus $(4;1,1,1)$ included, and numerically by one curved-edge example, $(4;2,1,1)$; curves with $1<n_2<2$ or $1<n_3<2$ ($C^1$, with unbounded curvature; Corollary~\ref{cor:geom}) and cusps ($n_2<1$ or $n_3<1$) are not treated.

Several questions remain open: a periodic Caputo operator compatible with the dihedral symmetry, isoperimetric inequalities for $\Theta_\alpha$, fractional Gauss and mean curvatures of Gielis surfaces, a Frenet theory of Gielis curves based on the exact Caputo derivative in the spirit of \cite{RubioLopez2023,RubioLopez2024,Ogrenmis2025,RubioLopez2026}, and the calibration of $\alpha$ as a memory parameter in growth models.

\section*{Declarations}
\noindent\textbf{Code availability.} The Python library used in the computations is listed in full in \ref{app:code}.

\smallskip
\noindent\textbf{Funding.} This research received no specific grant from any funding agency in the public, commercial or not-for-profit sectors.

\smallskip
\noindent\textbf{Competing interests.} The authors declare that they have no known competing financial interests or personal relationships that could have appeared to influence the work reported in this paper.

\smallskip
\noindent\textbf{Author contributions.} Both authors contributed equally to this work and approved the final manuscript.

\appendix
\section{Source code of the library (Python 3, NumPy/SciPy)}\label{app:code}
The Python library used in the computations of this paper is listed below.
\begin{lstlisting}
import numpy as np
from scipy.special import gamma
from scipy.integrate import trapezoid, cumulative_trapezoid

CONVENTIONS = ("E", "F")


# ----------------------------------------------------------------------------
# Gielis curves
# ----------------------------------------------------------------------------
def gielis_r(phi, m, n1, n2, n3, a=1.0, b=1.0):
    return (np.abs(np.cos(m * phi / 4) / a) ** n2 +
            np.abs(np.sin(m * phi / 4) / b) ** n3) ** (-1.0 / n1)


def is_2pi_periodic(m, n1, n2, n3, a=1.0, b=1.0, tol=1e-12, n_test=4001):
    phi = np.linspace(0.0, 2 * np.pi, n_test)
    r0 = gielis_r(phi, m, n1, n2, n3, a, b)
    r1 = gielis_r(phi + 2 * np.pi, m, n1, n2, n3, a, b)
    return bool(np.max(np.abs(r1 - r0)) <= tol * np.max(r0))


def rotation_order(m, n1, n2, n3, a=1.0, b=1.0):
    if a == b and n2 == n3:
        return int(m)
    if int(m) % 2 == 0:
        return int(m) // 2
    raise ValueError("odd m with a != b or n2 != n3: r is not 2*pi-periodic")


def fft_deriv(f, order, period=2 * np.pi):
    n = len(f)
    k = np.fft.fftfreq(n, d=period / n) * 2 * np.pi
    return np.real(np.fft.ifft((1j * k) ** order * np.fft.fft(f)))


def gielis_geometry(m, n1, n2, n3, a=1.0, b=1.0, N=11520, phi0=0.0):
    if not is_2pi_periodic(m, n1, n2, n3, a, b):
        raise ValueError("r is not 2*pi-periodic for these parameters (Lemma 1)")
    if min(n2, n3) < 2:
        raise ValueError("n2 or n3 < 2: r is not C^2 (Corollary 1)")
    phi = phi0 + np.linspace(0.0, 2 * np.pi, N, endpoint=False)
    r = gielis_r(phi, m, n1, n2, n3, a, b)
    r1 = fft_deriv(r, 1)
    r2 = fft_deriv(r, 2)
    nu = np.sqrt(r ** 2 + r1 ** 2)
    kappa = (r ** 2 + 2 * r1 ** 2 - r * r2) / nu ** 3
    dphi = 2 * np.pi / N
    u = np.concatenate([[0.0], np.cumsum(0.5 * (nu[:-1] + nu[1:]) * dphi)])
    L = u[-1] + 0.5 * (nu[-1] + nu[0]) * dphi
    return dict(phi=phi, r=r, x=r * np.cos(phi), y=r * np.sin(phi),
                kappa=kappa, u=u, L=L, nu=nu)


# ----------------------------------------------------------------------------
# Caputo weights (Section 2.2)
# ----------------------------------------------------------------------------
def weight_params(alpha, conv="E"):
    if not (0.0 < alpha <= 1.0):
        raise ValueError("alpha must lie in (0, 1]")
    if conv == "E":
        return alpha / gamma(2.0 - alpha), 1.0 - alpha
    if conv == "F":
        return (alpha ** (2.0 / alpha - 1.0) * gamma(2.0 - alpha) ** (-1.0 / alpha),
                (1.0 - alpha) / alpha)
    raise ValueError("conv must be 'E' or 'F'")


def weight(u, alpha, conv="E"):
    C, p = weight_params(alpha, conv)
    return C * np.asarray(u, dtype=float) ** p


def frac_param(u, alpha, conv="E"):
    C, p = weight_params(alpha, conv)
    return C * np.asarray(u, dtype=float) ** (p + 1.0) / (p + 1.0)


def kappa_alpha(geo, alpha, conv="E"):
    C, p = weight_params(alpha, conv)
    k = np.asarray(geo["kappa"], dtype=float)
    u = np.asarray(geo["u"], dtype=float)
    if p == 0.0:
        return k.copy()
    out = np.full_like(k, np.nan)
    pos = u > 0
    out[pos] = k[pos] / (C * u[pos] ** p)
    return out


# ----------------------------------------------------------------------------
# Fractional total turning (Section 5)
# ----------------------------------------------------------------------------
def theta_frac(geo, alpha, conv="E"):
    C, p = weight_params(alpha, conv)
    if p >= 1.0:
        raise ValueError("p >= 1 is not supported: Theta diverges if kappa(0) != 0 "
                         "(Proposition 2(b)); see Proposition 2(e)")
    W = np.append(geo["u"], geo["L"]) ** (1.0 - p)
    kap = np.append(geo["kappa"], geo["kappa"][0])      # close the loop
    return trapezoid(kap, W) / (C * (1.0 - p))


def theta_rescaled(theta, alpha, conv, factor):
    _, p = weight_params(alpha, conv)
    return theta * factor ** (-p)


def theta_circle(alpha, L, conv="E"):
    C, p = weight_params(alpha, conv)
    if p >= 1.0:
        return np.inf
    return 2 * np.pi * L ** (-p) / (C * (1.0 - p))


def theta_polygon(alpha, n_sides, edge, conv="E"):
    C, p = weight_params(alpha, conv)
    j = np.arange(n_sides)
    uj = (2 * j + 1) * edge / 2.0
    return (2 * np.pi / n_sides) * np.sum(1.0 / (C * uj ** p))


def theta_rhombus(alpha, conv="E"):
    return theta_polygon(alpha, 4, np.sqrt(2.0), conv)


# ----------------------------------------------------------------------------
# Curves of constant fractional curvature (Theorem 3)
# ----------------------------------------------------------------------------
def const_curvature_curve(alpha, conv="E", kappa0=1.0, turning=4 * np.pi,
                          npts=20001):
    C, p = weight_params(alpha, conv)
    L = ((p + 1.0) * turning / (kappa0 * C)) ** (1.0 / (p + 1.0))
    u = np.linspace(0.0, L, npts)
    th = kappa0 * frac_param(u, alpha, conv)
    x = cumulative_trapezoid(np.cos(th), u, initial=0.0)
    y = cumulative_trapezoid(np.sin(th), u, initial=0.0)
    return u, x, y, kappa0 * weight(u, alpha, conv)


# ----------------------------------------------------------------------------
# Cylindrical helix over a convex planar base (Section 6)
# ----------------------------------------------------------------------------
def helix_over_base(geo, beta):
    u = geo["u"] / np.cos(beta)
    kap = geo["kappa"] * np.cos(beta) ** 2
    tau = geo["kappa"] * np.sin(beta) * np.cos(beta)
    return u, kap, tau
\end{lstlisting}


\end{document}